\newtheorem{problem}[theorem]{Problem}
\newtheorem{conjecture}[theorem]{Conjecture}
\title{Recovering a group from few orbits}
\author{Dustin G.\ Mixon\thanks{Department of Mathematics, The Ohio State University, Columbus, Ohio, USA} \thanks{Translational Data Analytics Institute, The Ohio State University, Columbus, Ohio, USA} \and Brantley Vose\footnotemark[1]}
\date{}
\begin{document}
\maketitle

\begin{abstract}
For an unknown finite group $G$ of automorphisms of a finite-dimensional Hilbert space, we find sharp bounds on the number of generic $G$-orbits needed to recover $G$ up to group isomorphism, as well as the number needed to recover $G$ as a concrete set of automorphisms.
\end{abstract}

\section{Introduction}

Symmetry is a powerful tool in mathematics, but in some applications, the symmetries at play are not self-evident.
Suppose a set has some unknown symmetry group acting upon it and we can only observe a few of the action's orbits, delivered as unstructured subsets.

\begin{center}
\textit{To what extent do the orbits of an action betray the underlying group?}
\end{center}

Motivated by the task of learning symmetries in data, we are particularly interested in determining an unknown finite group of automorphisms (i.e., linear isometries) of a finite-dimensional Hilbert space from a sample of its orbits.
In this setting, there are a few ways in which orbits can be uninformative.
For example, since $\{0\}$ is an orbit under all of the groups we consider, observing this orbit is not helpful.
Similarly, given two orbits that differ by a scalar multiple, observing both orbits is informationally equivalent to observing just one of them.
To avoid such degeneracies, we will assume that the orbits we receive are \emph{generic} in some sense. 
In particular, we will use the topologist's notion of genericity, which we make explicit in Section~\ref{subsec:conventions}.
Specifically, we would like to solve the following inverse problems.

\begin{problem}
\label{prob.main}
Let $V$ denote a finite-dimensional real or complex Hilbert space, and take any finite subgroup $G$ of the group $\operatorname{Aut}(V)$ of automorphisms of $V$. 
\begin{enumerate}[label=(\alph*)]
\item\label{prob.main.abstract}
\textbf{Abstract Group Recovery.}\ How many generic $G$-orbits determine $G$ up to group isomorphism?
\item\label{prob.main.concrete}
\textbf{Concrete Group Recovery.}\ How many generic $G$-orbits determine $G$ as a subset of $\operatorname{Aut}(V)$?
\end{enumerate}
\end{problem}

The reader is invited to try their hand at a few simple instances of Problem~\ref{prob.main} when $V=\mathbb R^2$, presented in Figure~\ref{fig:orbit-game}.
In this paper, we partially solve Problem~\ref{prob.main} in general, and we completely solve part~\ref{prob.main.abstract} in the complex case.
The remainder of this section discusses some relevant background before outlining the paper.

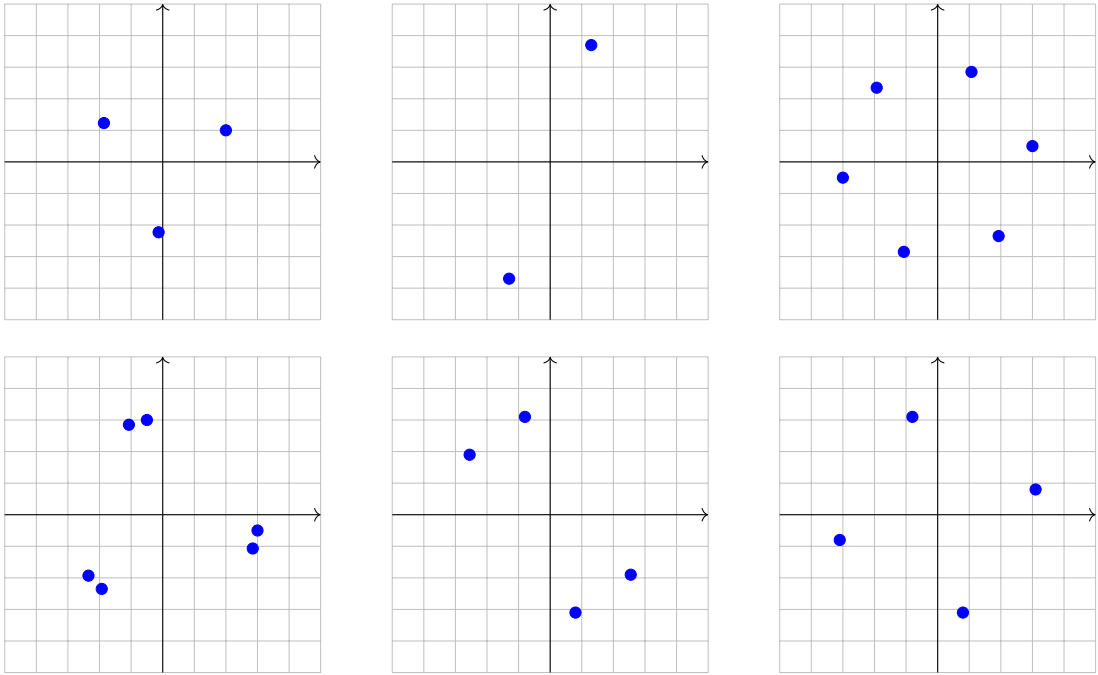
\begin{figure}
    \centering
    
    \begin{tikzpicture}[scale=0.42]
    \draw[very thin, lightgray] (-5, -5) grid (5, 5);

    \draw[->] (-5, 0) -- (5, 0) node[right] {};
    \draw[->] (0, -5) -- (0, 5) node[above] {};

    \coordinate (P1) at (2, 1);
    \coordinate (P2) at (-1.86, 1.23);
    \coordinate (P3) at (-0.13, -2.23);

    \filldraw[blue] (P1) circle (5pt);
    \filldraw[blue] (P2) circle (5pt);
    \filldraw[blue] (P3) circle (5pt);

    \end{tikzpicture}%
    \qquad%
    \begin{tikzpicture}[scale=0.42]
    \draw[very thin, lightgray] (-5, -5) grid (5, 5);

    \draw[->] (-5, 0) -- (5, 0) node[right] {};
    \draw[->] (0, -5) -- (0, 5) node[above] {};

    \coordinate (P1) at (1.3, 3.7);
    \coordinate (P2) at (-1.3, -3.7);

    \filldraw[blue] (P1) circle (5pt);
    \filldraw[blue] (P2) circle (5pt);

    \end{tikzpicture}%
    \qquad%
    \begin{tikzpicture}[scale=0.42]
    \draw[very thin, lightgray] (-5, -5) grid (5, 5);

    \draw[->] (-5, 0) -- (5, 0) node[right] {};
    \draw[->] (0, -5) -- (0, 5) node[above] {};

    \coordinate (P1) at (3, 0.5);
    \coordinate (P2) at (1.07, 2.85);
    \coordinate (P3) at (-1.93, 2.35);
    \coordinate (P4) at (-3, -0.5);
    \coordinate (P5) at (-1.07, -2.85);
    \coordinate (P6) at (1.93, -2.35);

    \filldraw[blue] (P1) circle (5pt);
    \filldraw[blue] (P2) circle (5pt);
    \filldraw[blue] (P3) circle (5pt);
    \filldraw[blue] (P4) circle (5pt);
    \filldraw[blue] (P5) circle (5pt);
    \filldraw[blue] (P6) circle (5pt);

    \end{tikzpicture}%

    \medskip
    
    \begin{tikzpicture}[scale=0.42]
    \draw[very thin, lightgray] (-5, -5) grid (5, 5);

    \draw[->] (-5, 0) -- (5, 0) node[right] {};
    \draw[->] (0, -5) -- (0, 5) node[above] {};

    \coordinate (P1) at (3, -0.5);
    \coordinate (P2) at (-0.5, 3);
    \coordinate (P3) at (-1.07, 2.85);
    \coordinate (P4) at (2.85, -1.07);
    \coordinate (P5) at (-1.93, -2.35);
    \coordinate (P6) at (-2.35, -1.93);

    \filldraw[blue] (P1) circle (5pt);
    \filldraw[blue] (P2) circle (5pt);
    \filldraw[blue] (P3) circle (5pt);
    \filldraw[blue] (P4) circle (5pt);
    \filldraw[blue] (P5) circle (5pt);
    \filldraw[blue] (P6) circle (5pt);

    \end{tikzpicture}%
    \qquad%
    \begin{tikzpicture}[scale=0.42]
    \draw[very thin, lightgray] (-5, -5) grid (5, 5);

    \draw[->] (-5, 0) -- (5, 0) node[right] {};
    \draw[->] (0, -5) -- (0, 5) node[above] {};

    \coordinate (P1) at (0.8, -3.1);
    \coordinate (P2) at (2.55, -1.9);
    \coordinate (P3) at (-0.8, 3.1);
    \coordinate (P4) at (-2.55, 1.9);

    \filldraw[blue] (P1) circle (5pt);
    \filldraw[blue] (P2) circle (5pt);
    \filldraw[blue] (P3) circle (5pt);
    \filldraw[blue] (P4) circle (5pt);

    \end{tikzpicture}%
    \qquad%
    \begin{tikzpicture}[scale=0.42]
    \draw[very thin, lightgray] (-5, -5) grid (5, 5);

    \draw[->] (-5, 0) -- (5, 0) node[right] {};
    \draw[->] (0, -5) -- (0, 5) node[above] {};

    \coordinate (P1) at (-0.8, 3.1);
    \coordinate (P2) at (3.1, 0.8);
    \coordinate (P3) at (0.8, -3.1);
    \coordinate (P4) at (-3.1, -0.8);

    \filldraw[blue] (P1) circle (5pt);
    \filldraw[blue] (P2) circle (5pt);
    \filldraw[blue] (P3) circle (5pt);
    \filldraw[blue] (P4) circle (5pt);

    \end{tikzpicture}%
    \qquad%
    
    \caption{Six orbits in $\bbR^2$ arising from the actions of six different subgroups of the orthogonal group $\op{O}(2)$. The reader is invited to guess the isomorphism class of the group that generated each orbit. In each case, you may assume that the point that generated the orbit was drawn at random according to a continuous probability distribution over $\mathbb R^2$. The solutions can be found in the footnote on the next page.\protect\footnotemark}
    \label{fig:orbit-game}
\end{figure}

\subsection{Related work}

This paper represents the latest entry in a rapidly growing literature on symmetry in data science.
For context, we describe some of the most recent papers in this space, which can be roughly partitioned according to whether the motivating application stems from signal processing or machine learning.

\textbf{Signal processing.}
In many applications, one must reconstruct an object from various observations despite an ambiguity that stems from a known group action.
In phase retrieval~\cite{BendoryDES:23}, one seeks to recover a function with some known structure from the pointwise modulus of its Fourier transform, thereby exhibiting an ambiguity from $\operatorname{U}(1)$.
In cryogenic electron microscopy~\cite{BendoryDES:24,BendoryE:24,BendoryEM:24,EdidinS:24,FanLSWX:24,HoskinsKMSW:24,ZhangMKVMGS:24}, one seeks to reconstruct a function in $L^2(\mathbb{R}^3)$ from several noisy $2$-dimensional projections of randomly rotated versions of the function.
To facilitate the study of cryo-EM, it has become popular to consider sub-problems like \textit{synchronization} and \textit{multi-reference alignment}. 
In synchronization~\cite{HadiBS:24}, one seeks to reconstruct group elements that secretly label the vertices of a directed graph from noisy quotients that correspond to directed edges between the vertices.
In multi-reference alignment~\cite{BandeiraBSKNWPW:23,CahillCC:20,EdidinK:24,YinLH:24}, one seeks to reconstruct a function from several noisy and randomly translated versions of the function.

\textbf{Machine learning.}
Various tasks in machine learning can be made more efficient after identifying a group that acts on the input data.
In such settings, depending on the task, it can be beneficial to use features that are either invariant or equivariant to the group action~\cite{BlumSmithV:23}.
Recently, there has been a flurry of research~\cite{CahillCC:24,CahillIM:24,Derksen:24,DymG:24} to upgrade classical invariant theory to allow for invariant features that are semialgebraic, Lipschitz, and sometimes even bilipschitz in the quotient metric.
Some of the most popular invariants for this task are based on \textit{max filtering}~\cite{CahillIMP:24,MixonP:23,MixonQ:22} or \textit{coorbits}~\cite{BalanT:23a,BalanT:23b,BalanTW:24,MixonQ:24}.
In contexts where a data point is the adjacency matrix of a weighted graph on $n$ vertices, one might account for the conjugation action of $S_n$ using a graph neural network or similar architecture~\cite{AmirGARD:24,BlumSmithHCV:24,BokerLHVM:24,HordanAD:24,HuangLV:24,SverdlovDDA:24,SverdlovSD:24}.
When learning physical laws from data, it can be helpful to account for equivariance to rotation~\cite{VillarHSFYBS:21} or to changes in the units~\cite{VillarYHBSD:23}.
Finally, there are some settings in which the group is not known \textit{a priori}, and so one must learn the group from the data~\cite{CahillMP:23,EnnesT:23}.
The present paper is motivated by this same problem, although we focus on finite groups, whereas the previous papers focused on connected Lie groups.

In terms of mathematical techniques, this paper is a spiritual descendant of two different lines of research, which focus on particular applications of symmetry and genericity, respectively:

\textbf{Symmetry in codes.}
In~\cite{FejesToth:64,FejesToth:86}, Fejes T\'{o}th observed that solutions to extremal problems in geometry frequently exhibit high degrees of symmetry.
This occurs, for example, when packing points in the sphere in such a way that maximizes the minimum pairwise distance~\cite{FejesToth:40}.
In fact, there are several \textit{universally optimal} configurations in $S^{d-1}$~\cite{CohnK:07}, meaning they simultaneously minimize a particular infinite family of energies, and each of these configurations happens to be highly symmetric.
The optimal codes of $d^2$ points in $\mathbb{CP}^{d-1}$ are conjecturally given by Heisenberg--Weyl orbits that enjoy an efficient description in terms of Stark units~\cite{Kopp:18}.
There are many more instances of symmetric arrangements being optimal as codes~\cite{CoxKMP:20,FickusGI:24,FickusIJM:24,IversonJM:24,IversonM:22,IversonM:24}.
The symmetry exhibited by a given arrangement of points was studied by Waldron and his collaborators in~\cite{BroomeW:13,ChienW:16,KingMW:21,ValeW:10,ValeW:04,Waldron:18}, and their use of graphs and representations greatly influenced the approach we use in this paper.

\textbf{Generic identifiability.}
Given a function $f\colon X\to Y$ and data $y\in\operatorname{im}f$, consider the inverse problem of finding all $x\in X$ for which $f(x)=y$.
In settings where $f$ is semialgebraically specified by a vector of parameters $z$, one might ask whether this inverse problem has a unique solution for a generic $z$ and arbitrary $y$ (or for a generic combination of $z$ and $y$).
This approach was first adopted by Balan, Casazza, and Edidin in~\cite{BalanCE:06} to determine injectivity conditions for phase retrieval.
The techniques developed in that paper have since been used in other settings related to phase retrieval~\cite{ConcaEHV:15,RongWX:21,WangX:19} and invariant machine learning~\cite{CahillCC:20,CahillIMP:24,DymG:24}.
Considering the prominence of the word ``generic'' in Problem~\ref{prob.main}, it should come as no surprise that these techniques also appear in this paper.

\subsection{Conventions and preliminaries}\label{subsec:conventions}
If a group $G$ acts on a set $X$, we use $Gx$ to denote the orbit of a point $x\in X$. 
Given a subset $S$ of a real or complex Hilbert space $V$, we let $\op{Aut}(S)$ denote the group of permutations $\pi\colon S\to S$ for which there exists a linear isometry $M\colon V\to V$ with $M|_S = \sigma$. 
That is, $\op{Aut}(S)$ is the group of permutations on $S$ that extend (possibly non-uniquely) to a linear isometry of $V$. 
In particular, $\op{Aut}(V)$ is the unitary group $\op{U}(V)$ if $V$ is a complex, and the orthogonal group $\op{O}(V)$ if $V$ is real.
Throughout this paper, the inner product of a complex Hilbert space is conjugate-linear in the \emph{first} argument and linear in the \emph{second}. 

We will make use of graphs as convenient data structures. Specifically, by \emph{edge-labeled directed graph}, we mean a directed graph with no multiple edges and whose edges are labeled with elements of some underlying edge label set.
Given edge-labeled directed graphs $\Gamma_1$ and $\Gamma_2$ with vertex sets $V_1$ and $V_2$ and edge label sets $L_1$ and $L_2$ respectively, an \emph{isomorphism of edge-labeled directed graphs $\Gamma_1\to \Gamma_2$} is a pair of bijections $f_V\colon V_1\to V_2$ and $f_L\colon L_1\to L_2$ such that
\begin{itemize}
    \item For each $v,w\in V_1$, $\Gamma_1$ has a directed edge $v\to w$ if and only if $\Gamma_2$ has a directed edge $f_V(v)\to f_V(w)$.
    \item For each edge $v\to w$ in $\Gamma_1$ with label $\ell$, the edge $f_V(v)\to f_V(w)$ in $\Gamma_2$ is labeled with $f_L(\ell)$.
\end{itemize}
In particular, an isomorphism of edge-labeled directed graphs need not preserve the edge labels, and the graphs involved need not share an edge label set.

Given a topological space $X$, a subset $U \subseteq X$ is said to be \emph{generic} if $U$ is open and dense in $X$. 
A condition on the elements of $X$ is called a \emph{generic condition} if there is a generic subset of $X$ on which the condition holds.
We use two primary tools to construct generic sets.
First, the complement of the zero set of a nonzero polynomial function $p\colon \mathbb{R}^n\to\mathbb{R}$ is generic; indeed, given a polynomial $p$ that is identically zero over a neighborhood of a point $x_0\in\mathbb{R}^n$, its Taylor series expansion about $x_0$ reveals that $p$ is the zero polynomial.
Second, any finite intersection of generic sets is generic.
In particular, every generic set we construct in this paper is a nonempty open set in the \textit{Zariski topology}. 
For a complex vector space, we use the Zariski topology that arises from viewing it as a real vector space.

\footnotetext{Solutions for Figure~\ref{fig:orbit-game}: The underlying groups are isomorphic to (top row, left to right) $C_3$, $C_2$, $C_6$, (bottom row, left to right) $D_3$, $D_2$, $C_4$. Note that some orbits could conceivably have been generated by multiple distinct groups. For instance, the hexagonal orbit could have been generated by a group isomorphic to $D_3$ or even $D_6$. However, since the initial point was drawn at random according to a continuous probability distribution, $D_3$ or $D_6$ would produce a hexagonal orbit with probability zero, so we may safely rule them out. We address these issues of coincidences with more care in Section~\ref{sec.abstract recovery}.}

\subsection{Roadmap}

In the next section, we tackle Problem~\ref{prob.main}\ref{prob.main.abstract}.
Perhaps surprisingly, the number of necessary orbits does not depend on the dimension of $V$ or any property of $G$, at least in the complex case: a \textit{single} generic orbit already betrays the isomorphism class of $G$.
One can recover a group isomorphic to $G$, for instance, as the automorphism group of the orbit.
We also find that in the real case, a similar method recovers the isomorphism class of $G$ provided that one observes \emph{two} generic orbits.
We suspect only one generic orbit is necessary in the real case, and that our requirement for a second orbit is an artifact of our recovery method.
We show that a single generic orbit suffices in the real case if $G$ has prime order or if $V$ has dimension $2$, but we leave the general case as an open problem.

In Section~\ref{sec:concrete recovery} we turn our attention to Problem~\ref{prob.main}\ref{prob.main.concrete}. 
We recover the concrete group $G$ in two steps. 
We first show that, given enough generic orbits, one can recover the permutations that $G$ induces on the union of orbits.
Next, if the orbits span a large enough subspace of $V$, then we can recover $G$ as a concrete group by linearly extending the permutation action to the span of the orbits, and then arguing that $G$ necessarily acts trivially on the orthogonal complement.
We derive explicit bounds on the number of necessary generic orbits using tools from representation theory.
We conclude in Section~\ref{sec.discussion} with a brief discussion.

Table~\ref{table.bounds} summarizes our main results, namely, numbers of generic orbits that suffice to solve Problem~\ref{prob.main} in various settings.
\begin{table}[h]
\centering
\begin{tabular}{|lcc|}
\hline
$\phantom{\bigg|}$
underlying field & $\mathbb C$ & $\mathbb R$
\\ \hline\hline
$\phantom{\bigg|}$
abstract group recovery & 1 & 2 
\\ \hline
$\phantom{\bigg|}$
concrete group recovery \quad & \quad $\displaystyle\max_\pi \tfrac{n_\pi(V)-(r-1)\cdot[\pi=\mathbf{1}]}{\operatorname{dim} \pi}$ \quad & \quad $\displaystyle\max\left\{~\max_\pi \tfrac{n_\pi(V)-(r-1)\cdot[\pi=\mathbf{1}]}{n_\pi(R)},~ 2~\right\}$ \quad \\\hline
\end{tabular}
\caption{The main results of this paper. Each entry reports how many generic orbits suffice to solve the abstract or concrete group recovery problem; see Problem~\ref{prob.main}. The rows represent parts~\ref{prob.main.abstract} and \ref{prob.main.concrete} of Problem~\ref{prob.main}, respectively. The columns represent the field underlying the Hilbert space $V$ in question. The notation in the second row is explained in detail in Section~\ref{sec:concrete recovery}, but we explain it briefly here. The maximum is over all (finitely many) irreducible representations $\pi$ of the hidden abstract group, $n_\pi(V)$ denotes the multiplicity of $\pi$ in $V$, $R$ denotes the regular representation, $\mathbf{1}$ denotes the trivial representation, $r$ is the minimum dimension of a nontrivial representation of the hidden abstract group, and the \textit{Iverson bracket} $[P]$ is $1$ when the statement $P$ holds and $0$ otherwise.} \label{table.bounds}
\end{table}

\section{Recovering the abstract group}\label{sec.abstract recovery}

We start by drawing some intuition from low-dimensional examples.

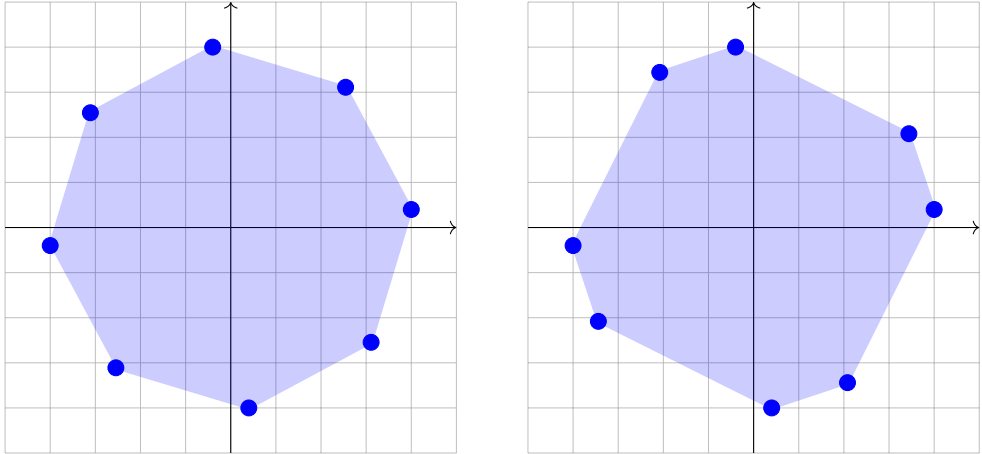
\begin{figure}
    \centering

    \begin{tikzpicture}[scale=0.6]
    \draw[very thin, lightgray] (-5, -5) grid (5, 5);

    \draw[->] (-5, 0) -- (5, 0) node[right] {};
    \draw[->] (0, -5) -- (0, 5) node[above] {};

    \coordinate (P1) at (0.4, -4);
    \coordinate (P2) at (3.11, -2.545);
    \coordinate (P3) at (4, 0.4);
    \coordinate (P4) at (2.545, 3.11);
    \coordinate (P5) at (-0.4, 4);
    \coordinate (P6) at (-3.11, 2.545);
    \coordinate (P7) at (-4, -0.4);
    \coordinate (P8) at (-2.545, -3.11);

    \draw[very thick, blue, fill, opacity = 0.2] (P8) -- (P1) -- (P2) -- (P3) -- (P4) -- (P5) -- (P6) -- (P7);

    \filldraw[blue] (P1) circle (5pt);
    \filldraw[blue] (P2) circle (5pt);
    \filldraw[blue] (P3) circle (5pt);
    \filldraw[blue] (P4) circle (5pt);
    \filldraw[blue] (P5) circle (5pt);
    \filldraw[blue] (P6) circle (5pt);
    \filldraw[blue] (P7) circle (5pt);
    \filldraw[blue] (P8) circle (5pt);

    \end{tikzpicture}%
    \qquad%
    \begin{tikzpicture}[scale=0.6]
    \draw[very thin, lightgray] (-5, -5) grid (5, 5);

    \draw[->] (-5, 0) -- (5, 0) node[right] {};
    \draw[->] (0, -5) -- (0, 5) node[above] {};

    \coordinate (P1) at (0.4, -4);
    \coordinate (P2) at (3.44, 2.08);
    \coordinate (P3) at (4, 0.4);
    \coordinate (P4) at (2.08, -3.44);
    \coordinate (P5) at (-0.4, 4);
    \coordinate (P6) at (-3.44, -2.08);
    \coordinate (P7) at (-4, -0.4);
    \coordinate (P8) at (-2.08, 3.44);

    \draw[very thick, blue, fill, opacity=0.2] (P8) -- (P5) -- (P2) -- (P3) -- (P4) -- (P1) -- (P6) -- (P7) -- (P8);

    \filldraw[blue] (P1) circle (5pt);
    \filldraw[blue] (P2) circle (5pt);
    \filldraw[blue] (P3) circle (5pt);
    \filldraw[blue] (P4) circle (5pt);
    \filldraw[blue] (P5) circle (5pt);
    \filldraw[blue] (P6) circle (5pt);
    \filldraw[blue] (P7) circle (5pt);
    \filldraw[blue] (P8) circle (5pt);

    \end{tikzpicture}%
    
    \caption{Orbits generated by $C_8$ (left) and $D_4$ (right). As discussed in Example~\ref{ex:davinci-actions}, the points fall on the vertices of a centered regular octagon and a centered truncated square, respectively, each shown in blue.}
    \label{fig:davinci-examples}
\end{figure}

\begin{example}\label{ex:davinci-actions}
Suppose $V=\mathbb{R}^2$.
By a theorem of Leonardo da Vinci~\cite{Weyl:52}, every subgroup of $\operatorname{O}(2)$ is either cyclic $C_n\leq\operatorname{SO}(2)$ or dihedral $D_n=\langle C_n,r\rangle$, where $r$ denotes reflection about some $1$-dimensional subspace of $\mathbb{R}^2$.
For $n\geq 3$, a generic orbit of $C_n$ forms the vertices of a regular $n$-gon. Meanwhile, for $n\geq 3$, a generic orbit of $D_n$ forms the vertices of a truncated regular $n$-gon, and a generic orbit of $D_2$ forms the vertices of an oblong rectangle. (See Figure~\ref{fig:davinci-examples} for an illustration.) Finally, both $C_2$ and $D_1$ have pairs of points as their generic orbits.
Considering $C_2\cong D_1$, it follows that the shape of a single orbit betrays the underlying group up to isomorphism, at least when $V=\mathbb R^2$.
\end{example}

Following the spirit of the above example, we attempt to use the shape of a single generic orbit to determine the underlying group.
First, we note that in the above example, the size of a generic orbit betrays the order of the group.
This behavior generalizes:

\begin{lemma}
\label{lem.distinct points}
Let $V$ denote a finite-dimensional real or complex vector space, and take any finite $G \leq \op{GL}(V)$.
Then for a generic $x\in V$, it holds that $|Gx|=|G|$.
\end{lemma}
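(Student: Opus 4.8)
The plan is to show that the set of $x \in V$ with $|Gx| = |G|$ is generic by identifying it with the complement of finitely many polynomial zero sets. The orbit $Gx$ has full size $|G|$ precisely when the action is free at $x$, i.e. when $gx \neq x$ for every non-identity $g \in G$. Equivalently, the ``bad'' set is $\bigcup_{g \in G \setminus \{e\}} \ker(g - I)$, a finite union of proper linear subspaces of $V$. Since a finite union of proper subspaces is nowhere dense and its complement is open and dense, this gives genericity once we check each $\ker(g-I)$ is proper.

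First I would fix an enumeration $g_1, \dots, g_k$ of the non-identity elements of $G$ and observe that for each $i$, the fixed subspace $W_i := \ker(g_i - I)$ is a proper subspace of $V$: if $g_i x = x$ for all $x \in V$ then $g_i = I$, contradicting $g_i \neq e$. (In the complex case one views $V$ as a real vector space, and $g_i - I$ as a real-linear map; it is still nonzero, so its kernel is a proper real subspace.) Next, for each $i$ I would pick a nonzero linear functional $\lambda_i$ vanishing on $W_i$ — concretely, a nonzero row of the matrix $g_i - I$ in some basis — so that $W_i \subseteq \{x : \lambda_i(x) = 0\}$. Then the polynomial $p := \prod_{i=1}^k \lambda_i$ is a nonzero polynomial on $V$ (as a real vector space), and whenever $p(x) \neq 0$ we have $x \notin W_i$ for every $i$, hence $g_i x \neq x$ for every non-identity $g_i$, hence the stabilizer of $x$ is trivial and $|Gx| = |G|$ by the orbit–stabilizer theorem.

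Finally I would invoke the two genericity tools recalled in Section~\ref{subsec:conventions}: the complement of the zero set of a nonzero polynomial $p\colon \mathbb{R}^n \to \mathbb{R}$ is generic, so the set $\{x \in V : p(x) \neq 0\}$ is generic, and it is contained in $\{x : |Gx| = |G|\}$, which therefore contains a generic set and so satisfies a generic condition. (One could equally note that $\{x : |Gx| = |G|\}$ is itself exactly the complement of $\bigcup_i W_i$, hence itself open and dense, but containing a generic subset already suffices for the statement.)

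I do not expect a serious obstacle here; the only mild subtlety is the bookkeeping in the complex case, where one must remember that $\op{GL}(V)$-elements need not be polynomial in the complex coordinates but are real-polynomial (indeed real-linear), so the argument should be phrased over $\mathbb{R}$ throughout, consistent with the paper's convention of using the Zariski topology coming from the underlying real structure. The reliance on $g_i \neq I \Rightarrow \ker(g_i - I) \subsetneq V$ is the one genuinely load-bearing fact, and it is immediate.
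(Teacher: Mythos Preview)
Your proof is correct and follows essentially the same approach as the paper: both identify the bad set as $\bigcup_{g\neq e}\ker(g-I)$, show each $\ker(g-I)$ is proper since $g\neq I$, and exhibit for each $g$ a nonzero real polynomial vanishing on that subspace, then intersect the generic complements. The only cosmetic difference is that the paper equips $V$ with an inner product and uses the quadratic polynomial $x\mapsto\|x-gx\|^2$ (whose zero set is exactly $\ker(g-I)$), whereas you use a linear functional given by a nonzero row of $g-I$ (whose zero set merely contains $\ker(g-I)$); both choices work equally well here.
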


\begin{proof}
Equip $V$ with an inner product, and given a non-identity group element $g\in G$, consider the function $p\colon x\mapsto\|x-gx\|^2$.
Considering $g$ is linear, if we view $V$ as a real vector space, then $p$ is a polynomial function of $\operatorname{dim}_\mathbb{R}(V)$ real coordinates.
Furthermore, since $g$ is not the identity, there exists $x_0$ such that $gx_0\neq x_0$, i.e., $p(x_0)\neq0$.
As such, $p$ is not the zero polynomial, and so there exists a generic set $K_g\subseteq V$ such that $gx\neq x$ for every $x\in K_g$.
It follows that $|G x|=|G|$ for every $x$ in the generic set $\bigcap_{g\in G\setminus\{1\}} K_g$.
\end{proof}

As a consequence of Lemma~\ref{lem.distinct points}, a group of prime order is betrayed by a single generic orbit, even if it does not act isometrically.
Next, to access the shape of a generic orbit, we note that shape is preserved by isometries.
As such, we are inclined to mod out by linear isometries by passing to the Gram matrix of the orbit.
However, since the orbit we receive is a set without additional structure, we do not have the information necessary to label the rows and columns of the Gram matrix by elements of the underlying group.
To avoid arbitrary indexing choices, we opt for an edge-labeled directed graph structure instead of a matrix:

\begin{definition}
Let $S$ be a finite subset of a Hilbert space $V$. 
The \textbf{Gram graph} of $S$ is the edge-labeled directed graph with vertex set $S$ such that for any vertices $s,t\in S$, there is a directed edge from $s$ to $t$ with the label $\langle s,t\rangle$.
\end{definition}

\begin{example}
\label{eq.c4 vs d2}
For $C_4$ and $D_2$ acting faithfully, linearly, and isometrically on $\mathbb{R}^2$, a generic orbit has a Gram graph with weighted adjacency matrix of the form
\[
C_4:
~~
\left[\begin{array}{cccc}
a&b&c&b\\
b&a&b&c\\
c&b&a&b\\
b&c&b&a
\end{array}\right],
\qquad\qquad
D_2:
~~
\left[\begin{array}{cccc}
a&b&c&d\\
b&a&d&c\\
c&d&a&b\\
d&c&b&a
\end{array}\right],
\]
with all distinct $a,b,c,d\in\mathbb{R}$.
We highlight two important facts about these examples:
\begin{itemize}
\item[(i)]
For each of these two groups, there is a single isomorphism class of edge-labeled directed graphs that contains the Gram graph of a generic orbit.
\item[(ii)]
The groups $C_4$ and $D_2$ determine distinct isomorphism classes of edge-labeled directed graphs.
\end{itemize}
These two facts together imply that the Gram graph of a generic orbit distinguishes $C_4$ from $D_2$.
\end{example}

The following theorem generalizes (i) in the above example.

\begin{theorem}\label{Gram-graph-characterization}
\label{thm.generic gram graph}
Let $V$ denote a finite-dimensional Hilbert space, and take any finite $G\leq \op{Aut}(V)$.
For a generic $x\in V$, the Gram graph of the orbit $Gx$ is isomorphic to the complete directed graph on vertex set $G$ with each edge $h\to k$ labeled by the corresponding linear map $\lambda(h^{-1}k)\colon V\to V$ defined by
\[
\lambda(g)
:=
\left\{\begin{array}{cl}
~g\phantom{^{-1}} & \text{if $V$ is complex,}\\
~g+g^{-1} & \text{if $V$ is real.}
\end{array}
\right.
\]
\end{theorem}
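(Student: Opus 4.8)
The plan is to write down an explicit isomorphism of edge-labeled directed graphs valid for generic $x$. By Lemma~\ref{lem.distinct points} there is a generic set of $x$ on which $f_V\colon G\to Gx$, $g\mapsto gx$, is a bijection; I will show that together with a suitable bijection $f_L$ of label sets it is an isomorphism from the target graph of the theorem onto the Gram graph of $Gx$. Since both graphs are complete directed graphs (with loops), the only requirement is a bijection $f_L$ between their label sets satisfying $f_L(\lambda(h^{-1}k))=\langle hx,kx\rangle$ for all $h,k\in G$.

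The first step is to compute the Gram labels. Every element of $G\leq\operatorname{Aut}(V)$ is a linear isometry, so $h^*=h^{-1}$ and hence $\langle hx,kx\rangle=\langle x,h^{-1}k\,x\rangle$; in particular this depends only on $g:=h^{-1}k$, just like $\lambda(h^{-1}k)$. When $V$ is complex, $\lambda(g)=g$, so $\langle hx,kx\rangle=\langle x,\lambda(g)x\rangle$. When $V$ is real, symmetry of the inner product gives $\langle x,g^{-1}x\rangle=\langle gx,x\rangle=\langle x,gx\rangle$, so $\langle x,\lambda(g)x\rangle=\langle x,(g+g^{-1})x\rangle=2\langle x,gx\rangle$. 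Thus $\langle hx,kx\rangle=c\langle x,\lambda(g)x\rangle$ for a fixed $c\in\{1,\tfrac12\}$, and the formula $f_L(\ell):=c\langle x,\ell x\rangle$ defines a map on the target label set $\{\lambda(g):g\in G\}$ whose image is precisely the Gram label set $\{\langle x,gx\rangle:g\in G\}$ and which satisfies the required identity. So $f_L$ is automatically well-defined and surjective.

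It remains to arrange that $f_L$ is injective, which is the only use of genericity. It is enough to ensure, for each of the finitely many pairs $g,g'\in G$ with $\lambda(g)\neq\lambda(g')$, that $\langle x,Ax\rangle\neq0$ where $A:=\lambda(g)-\lambda(g')\neq0$. The map $x\mapsto\langle x,Ax\rangle$ is a degree-two polynomial in the real coordinates of $x$ (in the complex case, its real and imaginary parts are), and it is not identically zero: in the real case $A$ is a nonzero real symmetric matrix (each $\lambda(g)=g+g^{-1}$ is symmetric because $g$ is orthogonal), and a nonzero real symmetric matrix has nonvanishing quadratic form; in the complex case $\langle x,Ax\rangle\equiv0$ would force $A=0$ by polarization. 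Intersecting the corresponding generic sets with the one from Lemma~\ref{lem.distinct points} yields a generic set of $x$ on which $f_V$ is a bijection and $f_L$ is a bijection of label sets, so $(f_V,f_L)$ is the claimed isomorphism.

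There is no deep step here; the main work is bookkeeping. One must respect the convention that the inner product is conjugate-linear in the first argument, keep the real and complex cases separate --- the definition of $\lambda$ is precisely engineered so that in the real case one works with the symmetric operator $g+g^{-1}$, whose quadratic form cannot vanish identically unless the operator is zero --- and remember to split the complex-valued polynomial $\langle x,Ax\rangle$ into real and imaginary parts before invoking the real Zariski-genericity framework of Section~\ref{subsec:conventions}.
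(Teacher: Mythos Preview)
Your proof is correct and follows essentially the same approach as the paper: both reduce to showing that, for generic $x$, the map $\lambda(g)\mapsto\langle x,\lambda(g)x\rangle$ is a bijection of label sets (the paper phrases this as ``the level sets of $g\mapsto\langle x,gx\rangle$ coincide with those of $g\mapsto\lambda(g)$''), and both handle the injectivity direction by noting that $A=\lambda(g)-\lambda(g')$ is self-adjoint in the real case and invoking a nonvanishing-quadratic-form argument. The only cosmetic difference is that in the complex case you dispatch $\langle x,Ax\rangle\not\equiv0$ via polarization, whereas the paper splits $A$ into its Hermitian and skew-Hermitian parts; both are one-line justifications.
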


\begin{proof}
Consider the following intermediate claim:
\begin{quote}
For generic $x\in V$, the level sets of $g\mapsto\langle x,gx\rangle$ are identical to the level sets of $g\mapsto\lambda(g)$.
\end{quote}
First, we show that this claim implies the result.
By Lemma~\ref{lem.distinct points}, the map $g\mapsto gx$ is injective on $G$ for generic $x$.
Thus, we may relabel the vertices of the Gram graph of $Gx$ by the corresponding members of $G$.
Next, the edge $h\to k$ in the Gram graph is labeled by the scalar $\langle hx,kx\rangle=\langle x,h^{-1}kx\rangle$.
By the intermediate claim, we may relabel the edges by the corresponding linear maps $\lambda(h^{-1}k)$, and the result follows.
It remains to establish the intermediate claim, which we prove in cases.

\medskip

\noindent
\textbf{Case I:} $V$ is complex.
First, if $\lambda(h)=\lambda(k)$, then $h=k$, and so $\langle x,hx\rangle=\langle x,kx\rangle$ for all $x\in V$.
For the other direction, suppose $\lambda(h)\neq\lambda(k)$, i.e., $h\neq k$, and consider the unique decomposition $h-k=A+iB$ in terms of self-adjoint maps $A$ and $B$.
Since $h\neq k$, we have that $A$ or $B$ is nonzero, and so $\langle x,(h-k)x\rangle
=\langle x,Ax\rangle+i\langle x,Bx\rangle$ is nonzero for all $x$ in a generic set $K_{h,k}\subseteq V$.
Thus, for all $x\in\bigcap_{g,g'\in G,g\neq g'}K_{g,g'}$, it holds that $\langle x,hx\rangle\neq\langle x,kx\rangle$.

\medskip

\noindent
\textbf{Case II:} $V$ is real.
First, $\langle x,gx\rangle=\langle g^{-1}x,x\rangle=\langle x,g^{-1}x\rangle$, and so $\langle x,gx\rangle=\frac{1}{2}\langle x,\lambda(g)x\rangle$.
Thus, it suffices to show that the level sets of $g\mapsto\langle x,\lambda(g)x\rangle$ are identical to those of $g\mapsto\lambda(g)$.
Of course, $\lambda(h)=\lambda(k)$ implies $\langle x,\lambda(h)x\rangle=\langle x,\lambda(k)x\rangle$ for all $x\in V$.
For the other direction, suppose $\lambda(h)\neq\lambda(k)$.
Then since $\lambda(h)-\lambda(k)$ is self-adjoint, $\langle x,(\lambda(h)-\lambda(k))x\rangle$ is nonzero for all $x$ in a generic set $K_{h,k}\subseteq V$.
Thus, for all $x\in\bigcap_{g,g'\in G,\lambda(g)\neq\lambda(g')}K_{g,g'}$, it holds that $\langle x,hx\rangle\neq\langle x,kx\rangle$.
\end{proof}

Theorem~\ref{Gram-graph-characterization} allows us to recover the abstract group from the Gram graph of a single generic orbit in the complex case. 
It also empowers us to recover the abstract group as the automorphism group of a single generic orbit, a capability we will use extensively in Section~\ref{sec:concrete recovery}.
Both of these consequences are captured in the following result.

\begin{theorem}[One-orbit theorem]
\label{thm:one-orbit}
\label{cor.complex gram graph to G}
Let $V$ denote a finite-dimensional complex Hilbert space, and take any finite $G\leq \op{U}(V)$.
Then for a generic $v\in V$, each of the following holds:
\begin{enumerate}[label=(\alph*)]
    \item\label{item:one-orbit-gram-graph} The isomorphism class of the Gram graph of $Gv$ determines $G$ up to isomorphism.
    \item\label{item:one-orbit-automorphism} The canonical map $G\to \op{Aut}(Gv)$ is an isomorphism.
\end{enumerate}
\end{theorem}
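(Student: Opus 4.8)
The plan is to derive both parts from Theorem~\ref{Gram-graph-characterization}. That result says there is a generic set $U\subseteq V$ such that for $v\in U$ the Gram graph of $Gv$ is isomorphic, as an edge-labeled directed graph, to the complete digraph $\Gamma_G$ on vertex set $G$ in which the edge $h\to k$ carries the label $h^{-1}k$. Shrinking $U$ if necessary so that Lemma~\ref{lem.distinct points} also applies, the vertex map $g\mapsto gv$ is a bijection $G\to Gv$, and (using unitarity, $\langle hv,kv\rangle=\langle v,h^{-1}kv\rangle$) the content of Theorem~\ref{Gram-graph-characterization} is the equivalence
\[
\langle hv,kv\rangle=\langle h'v,k'v\rangle
\quad\Longleftrightarrow\quad
h^{-1}k=(h')^{-1}k'
\qquad\text{for all }h,k,h',k'\in G.
\]
I would fix such a $v\in U$; both claims are proved for this $v$.

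For part~\ref{item:one-orbit-automorphism}: the canonical map $c\colon G\to\op{Aut}(Gv)$ sending $g$ to the permutation $s\mapsto gs$ is well defined because $s\mapsto gs$ is literally the restriction of the isometry $g$ of $V$, and it is a homomorphism. It is injective since $c(g)=\mathrm{id}$ forces $gv=v$, hence $g=1$ by Lemma~\ref{lem.distinct points}. The substance is surjectivity. Given $\pi\in\op{Aut}(Gv)$, choose a linear isometry $M$ of $V$ restricting to $\pi$, so $\langle\pi(s),\pi(t)\rangle=\langle Ms,Mt\rangle=\langle s,t\rangle$ for all $s,t\in Gv$. Transporting $\pi$ through the bijection $g\mapsto gv$ yields a permutation $\sigma$ of $G$ with $\pi(gv)=\sigma(g)v$, and the displayed identity becomes $\sigma(h)^{-1}\sigma(k)=h^{-1}k$ for all $h,k\in G$. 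Setting $h=1$ gives $\sigma(k)=\sigma(1)k$, so $\sigma$ is left multiplication by $g_0:=\sigma(1)$ and $\pi=c(g_0)$. Hence $c$ is an isomorphism.

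For part~\ref{item:one-orbit-gram-graph}: since the Gram graph of $Gv$ has the same isomorphism class as $\Gamma_G$, it suffices to show the isomorphism class of $\Gamma_G$ determines $G$ up to isomorphism, and I would do this by recovering the group law intrinsically from the graph. In $\Gamma_G$, for each vertex and each label there is exactly one out-edge from that vertex with that label (namely $h\to hg_0$ carries $g_0$); this is an isomorphism-invariant property, so in any graph $\Gamma$ with label set $L$ isomorphic to $\Gamma_G$ one may define $\ell_1\cdot\ell_2$ to be the label of $v\to u$, where $w$ is the unique vertex with $\mathrm{lab}(v\to w)=\ell_1$ and $u$ the unique vertex with $\mathrm{lab}(w\to u)=\ell_2$. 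A direct check in $\Gamma_G$ — where this recipe sends $(g_0,g_1)$ to $g_0g_1$ independently of the base vertex $h$ — shows the operation is well defined and that the label bijection of any isomorphism $\Gamma\to\Gamma_G$ intertwines $\cdot$ with the group law of $G$. Thus $(L,\cdot)\cong G$, and since $(L,\cdot)$ depends only on the isomorphism class of $\Gamma$, that class determines $G$ up to isomorphism. (Alternatively, part~\ref{item:one-orbit-gram-graph} follows from part~\ref{item:one-orbit-automorphism} once one notes, via the standard fact that a Gram-matrix-preserving bijection of finite subsets extends to an isometry, that $\op{Aut}(Gv)$ is exactly the group of label-preserving automorphisms of the Gram graph of $Gv$, and that this group is an isomorphism invariant of the edge-labeled directed graph.)

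I expect the surjectivity step in part~\ref{item:one-orbit-automorphism} to be the main obstacle: it is where Theorem~\ref{Gram-graph-characterization} does all the work, collapsing the geometric constraint ``$\pi$ preserves every pairwise inner product of the orbit'' into the purely combinatorial constraint ``$\sigma(h)^{-1}\sigma(k)=h^{-1}k$'', after which the group-theoretic conclusion that $\sigma$ is a left translation is immediate. Everything else — well-definedness and homomorphism properties of $c$, injectivity via Lemma~\ref{lem.distinct points}, and the operation-recovery argument for part~\ref{item:one-orbit-gram-graph} — is routine bookkeeping.
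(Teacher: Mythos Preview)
Your proof is correct and essentially follows the paper's approach. The only cosmetic difference is ordering: the paper proves~\ref{item:one-orbit-gram-graph} first by observing that the label-preserving automorphism group of the complete Cayley graph $\Gamma_G$ is $G$ itself (exactly your left-translation computation), and then deduces~\ref{item:one-orbit-automorphism} by a counting argument, whereas you run the same left-translation computation directly to establish surjectivity in~\ref{item:one-orbit-automorphism} and then derive~\ref{item:one-orbit-gram-graph}. Your primary argument for~\ref{item:one-orbit-gram-graph} (reconstructing the multiplication on labels) is a slightly different recipe than the paper's (label-preserving automorphism group), but your parenthetical alternative is precisely what the paper does.
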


\begin{proof}
For~\ref{item:one-orbit-gram-graph}, Theorem~\ref{thm.generic gram graph} implies that the Gram graph of a generic orbit is isomorphic to the complete directed graph on vertex set $G$ with each edge $h\to k$ labeled by the group element $h^{-1}k$.
This is known as the \textit{complete Cayley graph} of $G$.
The group of label-preserving graph automorphisms of the complete Cayley graph of $G$ is itself isomorphic to $G$, since each automorphism arises from left-multiplication by an element of $G$. The Gram graph of a generic orbit, being isomorphic to the complete Cayley graph of $G$, must exhibit the same automorphism group. Hence the label-preserving automorphism group of the Gram graph is isomorphic to $G$.

For~\ref{item:one-orbit-automorphism}, let $v\in V$ be generic.
By Lemma~\ref{lem.distinct points}, we may assume that $|Gv| = |G|$ and, since the action of $G$ on $Gv$ is transitive, that the canonical map $G\to \op{Aut}(Gv)$ is injective.
Each $\sigma \in \op{Aut}(Gv)$ preserves inner products between the elements of $Gv$ and hence induces a distinct label-preserving automorphism of the Gram graph of $Gv$. Since we have already shown in the proof of part~\ref{item:one-orbit-gram-graph} that there are exactly $|G|$ such automorphisms, we are done.
\end{proof}

Theorem~\ref{Gram-graph-characterization} seems to imply that the Gram graph of a single generic orbit in the real setting contains less information than in the complex setting. For instance, every skew-adjoint group element $g$ satisfies $\lambda(g) = g+g^{-1} = 0$. Even so, we believe that a real analog of Theorem~\ref{cor.complex gram graph to G}\ref{item:one-orbit-gram-graph} also holds, though a proof eludes us.

\begin{conjecture}
\label{conj.real gram graph to G}
Let $V$ denote a finite-dimensional real Hilbert space, and take any finite $G\leq \op{O}(V)$.
Then for a generic $x\in V$, the isomorphism class of the Gram graph of $Gx$ determines $G$ up to isomorphism.
\end{conjecture}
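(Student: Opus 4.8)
The plan is to translate Conjecture~\ref{conj.real gram graph to G} into a purely group-theoretic statement and then attack that statement by recovering coarse invariants first and bootstrapping toward the whole group. By Theorem~\ref{thm.generic gram graph}, for generic $x$ the Gram graph of $Gx$ is isomorphic to the edge-labeled complete directed graph $\Lambda(G)$ on vertex set $G$ whose edge $h\to k$ carries the label $\lambda(h^{-1}k)\in\operatorname{End}(V)$, two edges sharing a label exactly when the corresponding linear maps agree. Thus the conjecture is equivalent to the assertion that $\Lambda(G)\cong\Lambda(G')$ as edge-labeled directed graphs forces $G\cong G'$. Since $\Lambda(G)$ is vertex-transitive (left translation is a label-preserving automorphism), after composing with a left translation any such isomorphism is recorded by a bijection $\phi\colon G\to G'$ with $\phi(1)=1$ together with a bijection $\psi$ of label sets, satisfying $\lambda'\!\big(\phi(h)^{-1}\phi(hg)\big)=\psi\big(\lambda(g)\big)$ for all $h,g\in G$. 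Counting vertices gives $|G|=|G'|$; counting edges of each label shows $\psi$ matches the fiber sizes of $\lambda$ and $\lambda'$; and, taking $h=1$ above, $\psi(\lambda(g))=\lambda'(\phi(g))$, so the identity becomes $\phi(hg)\in\phi(h)\cdot F'\!\big(\phi(g)\big)$ where $F'(z)$ denotes the $\lambda'$-fiber through $z$. If every fiber of $\lambda'$ were a singleton this would force $\phi$ to be a group isomorphism; the whole difficulty is that they need not be, since $\lambda(g)=\lambda(g^{-1})$ always and, worse, one may independently invert $g$ on each of its two-dimensional rotation blocks without changing $\lambda(g)$ --- so fibers can be genuinely large, as for $C_n\times C_n$ acting on $\mathbb R^2\oplus\mathbb R^2$ by coordinatewise rotation, or even $C_{12}$ acting on $\mathbb R^4$ as $R_{2\pi/3}\oplus R_{\pi/2}$, where $\lambda(g)=\lambda(g^{5})$.

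I would first dispose of the case in which $G$ (equivalently $G'$) is \emph{$\lambda$-minimal}, meaning every fiber of $\lambda$ equals $\{g,g^{-1}\}$; this holds whenever $\dim V\le 2$ --- recovering the case the authors already settle via Leonardo da Vinci --- and more generally whenever every element of $G$ acts with at most one nontrivial two-dimensional rotation block. In this case $\Lambda(G)$ is the ``inversion-symmetrized complete Cayley graph'' of $G$ and $\phi(hg)\in\{\phi(h)\phi(g),\phi(h)\phi(g)^{-1}\}$ for all $h,g$. Applying this with $g$ replaced by $g^{-1}$, and to $(hg)^{-1}=g^{-1}h^{-1}$, one pins $\phi(hg)$ down to $\phi(h)\phi(g)$ except when $\phi(h)^2=\phi(g)^2$; since $\phi(h\cdot h)\in\{\phi(h)^2,1\}$ forces $\phi(h^2)=\phi(h)^2$ whenever $h^2\ne1$, the exceptional pairs reduce to elements with equal squares, most importantly pairs of involutions, and a direct argument handling these should finish the $\lambda$-minimal case.

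The main obstacle --- and the reason the statement remains a conjecture --- is controlling large fibers of $\lambda$. When a fiber has size greater than two, the ``approximate homomorphism'' $\phi$ carries too much slack for the forcing argument above, and one is reduced to ruling out that the $\lambda$-fiber structure of one group's orthogonal representation could be mimicked by a non-isomorphic group. Passing to the label-preserving automorphism group of $\Lambda(G)$ does not help: for $G=C_4$ acting on $\mathbb R^2$ this group is isomorphic to $D_4$, and $D_4$ contains regular subgroups isomorphic to both $C_4$ and $C_2\times C_2$, so $G$ is not determined as a regular subgroup of $\operatorname{Aut}(\Lambda(G))$; the distinction made in Example~\ref{eq.c4 vs d2} instead comes from the finer datum of which edges share labels. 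I expect a proof will require either a genuinely new invariant extracted from that finer datum, or a representation-theoretic classification of the pairs $(G,\rho)$ and $(G',\rho')$ whose $\lambda$-fiber partitions can be made to correspond, followed by a verification that each such coincidence forces $G\cong G'$. The abelian case looks the most approachable: there the $\lambda$-fiber partition is the orbit partition of the subgroup of $\operatorname{Aut}(G)$ generated by the ``partial inversions'' coming from the two-dimensional constituents of $\rho$, hence an association scheme, and one could hope to prove that these particular schemes determine $G$.
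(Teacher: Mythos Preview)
The paper does not prove this statement: it is Conjecture~\ref{conj.real gram graph to G}, and the authors explicitly say ``a proof eludes us,'' verifying only the two-dimensional case (Example~\ref{ex.real gram graph to G in R^2}). So there is no paper proof to compare against, and your proposal is, appropriately, presented as a research plan rather than a claimed proof. Your reformulation via Theorem~\ref{thm.generic gram graph} is correct and is the natural first move: the conjecture becomes the combinatorial assertion that an isomorphism $\Lambda(G)\cong\Lambda(G')$ of edge-labeled directed graphs forces $G\cong G'$, and your normalization to a bijection $\phi$ with $\phi(1)=1$ satisfying $\phi(hg)\in\phi(h)\cdot F'(\phi(g))$ is the right bookkeeping. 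Your identification of the obstruction --- that $\lambda$-fibers can strictly exceed $\{g,g^{-1}\}$, as in your $C_{12}$ example --- pinpoints the real difficulty, and your $C_4$ observation that the label-preserving automorphism group of $\Lambda(G)$ can admit non-isomorphic regular subgroups correctly explains why the complex-case argument of Theorem~\ref{cor.complex gram graph to G} does not transfer.

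But the proposal does not close even the $\lambda$-minimal case. The phrase ``a direct argument handling these should finish'' is where the actual content would have to go: the hypothesis $\phi(hg)\in\{\phi(h)\phi(g),\phi(h)\phi(g)^{-1}\}$ for all $h,g$ says precisely that $\phi$ is an isomorphism between the full Cayley color graphs of $G$ and $G'$ with connection sets of the form $\{g,g^{-1}\}$, and deducing $G\cong G'$ from this is a genuine combinatorial claim that your reduction to ``pairs with equal squares'' and then to involutions does not establish. Beyond the $\lambda$-minimal case you explicitly leave everything open, offering only speculative directions (association schemes in the abelian case, a representation-theoretic classification of coincident fiber partitions). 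So while the framing is sound and the obstacles you name are the real ones, there is no proof here --- which is consistent with the statement's status as an open conjecture in the paper.
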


We illustrate the plausibility and difficulty of Conjecture~\ref{conj.real gram graph to G} with a couple of examples.

\begin{example}
\label{ex.real gram graph to G in R^2}
Conjecture~\ref{conj.real gram graph to G} holds in two dimensions as a consequence of Theorem~\ref{thm.generic gram graph}.
Indeed, for $C_n$, the Gram graph of a generic orbit has $n$ vertices and $\lfloor\frac{n}{2}\rfloor+1$ edge labels, whereas for $D_n$, the Gram graph of a generic orbit has $2n$ vertices and $\lfloor\frac{3n}{2}\rfloor+1$ edge labels.
These features only agree for $C_2$ and $D_1$, which in turn are isomorphic as groups.
\end{example}

\begin{example}
It is possible for distinct but isomorphic subgroups of orthogonal groups to exhibit different Gram graphs.
For example, consider the following subgroups of $\operatorname{O}(4)$, each isomorphic to $C_4\oplus C_2$:
\[
G_1:=\left\langle r\oplus \operatorname{id}, \operatorname{id}\oplus-\operatorname{id}\right\rangle,
\qquad
G_2:=\left\langle r\oplus r, \operatorname{id}\oplus-\operatorname{id}\right\rangle,
\qquad
r:=\left[\begin{array}{rr} 0&-1\\1&0\end{array}\right],
\qquad
\operatorname{id}:=\left[\begin{array}{cc} 1&0\\0&1\end{array}\right].
\]
It turns out that the Gram graph of a generic orbit under $G_1$ has $6$ distinct edge labels, whereas that the Gram graph under $G_2$ has only $5$.
In particular, the real case differs from the complex case in the sense that the conjectured map from Gram graph isomorphism class to group isomorphism class is necessarily many-to-one.
\end{example}

While the real analog of Theorem~\ref{thm:one-orbit}\ref{item:one-orbit-gram-graph} is left as a conjecture, a naively stated real analog of Theorem~\ref{thm:one-orbit}\ref{item:one-orbit-automorphism} is simply false.
That is, given a real finite-dimensional Hilbert space $V$ and a finite subgroup $G\leq \op{O}(V)$, the symmetry group of a generic orbit is often strictly larger than $G$.
\begin{example}
    In Example~\ref{ex:davinci-actions}, we observed that a generic orbit under the action of $C_n\leq \op{O}(2)$ forms the vertices of a regular $n$-gon.
    The automorphism group of such an orbit is of course isomorphic to the dihedral group $D_n$, of which the sought-after $C_n$ is a strict subgroup of index $2$.
\end{example}
A more extreme example can be found in four dimensions.
\begin{example}
    Consider the quaternions as a four-dimensional real vector space equipped with the inner product obtained by declaring $\{1,i,j,k\}$ to be orthonormal.
    The order-$8$ quaternion group $Q_8$ acts on the space of all quaternions by left-multiplication.
    A generic orbit of this action falls on the vertices of a regular four-dimensional cross-polytope (also called the \emph{$16$-cell} or the \emph{hexadecachoron}) centered at the origin.
    Hence for a generic quaternion $q$, $\op{Aut}(Q_8q)$ is isomorphic to the symmetry group of the $16$-cell, a group of order $384$, of which the far-smaller $Q_8$ is a subgroup of index $48$.
\end{example}

We can see that, in the real setting, the automorphism group of a generic orbit can be far larger than the acting group. Even so, we find that observing \emph{two} generic orbits is enough to break the extraneous symmetry and determine the isomorphism class of $G$, giving us a correct real analog of Theorem~\ref{thm:one-orbit}\ref{item:one-orbit-automorphism}.
\begin{theorem}[Two-orbit theorem]
\label{thm:two-orbit}
Let $V$ denote a finite-dimensional real Hilbert space, and take any finite $G\leq \op{O}(V)$.
Then for generic $(v,w)\in V$, the canonical map $G\to \op{Aut}(Gv \cup Gw)$ is an isomorphism.
In particular, two generic orbits determine $G$ up to isomorphism.
\end{theorem}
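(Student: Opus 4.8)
The plan is to show that the canonical map $\iota\colon G\to\op{Aut}(Gv\cup Gw)$, $\iota(g):=g|_{Gv\cup Gw}$, is an isomorphism for generic $(v,w)\in V\oplus V$; the displayed ``in particular'' then follows at once, since from the observed set $Gv\cup Gw$ one can compute its automorphism group. Note first that $\iota$ is well defined: every $g\in G$ permutes each $G$-orbit and is itself a linear isometry of $V$ extending $g|_{Gv\cup Gw}$. I would prove injectivity and surjectivity separately. Injectivity is the easy direction and needs only Lemma~\ref{lem.distinct points}: for generic $v$ the stabilizer $\operatorname{Stab}_G(v)$ is trivial, and $\ker\iota\subseteq\operatorname{Stab}_G(v)$. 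So the substance of the theorem is surjectivity --- which is exactly where the second orbit is needed, since in the real case $\op{Aut}(Gv)$ for a single orbit can be far larger than $G$ (as the $C_n$ and $Q_8$ examples show), and the naive complex argument of Theorem~\ref{thm:one-orbit}\ref{item:one-orbit-automorphism} is no longer available.

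The key step is a ``cross-orbit'' genericity statement, to be contrasted with the intermediate claim inside the proof of Theorem~\ref{thm.generic gram graph}: \emph{for generic $(v,w)$, the map $g\mapsto\langle v,gw\rangle$ is injective on $G$}. The justification is short: for each fixed ordered pair $g\ne g'$ in $G$, the assignment $(v,w)\mapsto\langle v,(g-g')w\rangle$ is a bilinear, hence polynomial, function on $V\oplus V$, and it is not identically zero because $g\ne g'$ as linear maps (choose $w$ with $(g-g')w\ne0$ and then $v=(g-g')w$, making the value $\|(g-g')w\|^2>0$); so its nonvanishing locus is generic, and one intersects over the finitely many pairs. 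This is precisely the leverage two orbits provide over one: the single-orbit quantity $\langle v,gv\rangle$ only determines $\lambda(g)=g+g^{-1}$ --- the very source of the spurious symmetries in the real setting --- whereas the relative position $\langle v,gw\rangle$ of the two orbits pins down $g$ itself.

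Granting this, I would fix a generic $(v,w)$ that additionally satisfies $|Gv|=|Gw|=|G|$ (Lemma~\ref{lem.distinct points}) and $\|v\|\ne\|w\|$ (the polynomial $\|v\|^2-\|w\|^2$ is not identically zero). The norm condition forces $Gv\cap Gw=\emptyset$, and since a linear isometry preserves norms, any $\sigma\in\op{Aut}(Gv\cup Gw)$ must restrict to bijections $Gv\to Gv$ and $Gw\to Gw$. Using the orbit sizes, write $\sigma(hv)=\phi(h)v$ and $\sigma(hw)=\psi(h)w$ for bijections $\phi,\psi\colon G\to G$. Because $\sigma$ preserves inner products and $G\le\op{O}(V)$, for all $h,k\in G$ we get
\[
\langle v,\phi(h)^{-1}\psi(k)w\rangle=\langle\phi(h)v,\psi(k)w\rangle=\langle hv,kw\rangle=\langle v,h^{-1}kw\rangle,
\]
so the cross-orbit injectivity gives $\phi(h)^{-1}\psi(k)=h^{-1}k$ for all $h,k$. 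Taking $k=1$ yields $\phi(h)=\psi(1)\,h$, taking $h=1$ yields $\psi(k)=\phi(1)\,k$, and taking $h=k=1$ gives $\phi(1)=\psi(1)=:g_0$; hence $\sigma(hv)=g_0hv$ and $\sigma(hw)=g_0hw$, i.e.\ $\sigma=\iota(g_0)$. Thus $\iota$ is onto, and together with injectivity it is an isomorphism.

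I expect the only real obstacle to be spotting and verifying the cross-orbit genericity statement --- i.e.\ recognizing that, unlike the within-orbit Gram data, the between-orbit inner products $\langle v,gw\rangle$ separate the group elements, and checking the relevant bilinear forms are genuinely nonzero. Everything downstream is bookkeeping. The only other subtlety is excluding the degenerate coincidences $Gv\cap Gw\ne\emptyset$ and $\|v\|=\|w\|$, which could otherwise let an automorphism of $Gv\cup Gw$ shuffle points between the two orbits; both are ruled out on a generic set.
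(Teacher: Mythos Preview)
Your proof is correct and takes a genuinely different route from the paper's. The paper argues by complexification: it regards $(v,w)$ as the single point $v+iw\in V_{\mathbb C}$, invokes the complex one-orbit theorem (Theorem~\ref{thm:one-orbit}) to obtain $|\op{Aut}(G(v+iw))|=|G|$ generically, and then bounds $|\op{Aut}(Gv\cup Gw)|$ from above by constructing an injection $\alpha\colon\op{Aut}(Gv\cup Gw)\to\op{Aut}(G(v+iw))$ via $\alpha(\sigma)(gv+igw):=\sigma(gv)+i\sigma(gw)$. Your argument stays entirely over $\mathbb R$ and is self-contained: the ``cross-orbit injectivity'' statement --- that $g\mapsto\langle v,gw\rangle$ is injective on $G$ for generic $(v,w)$ --- is exactly the genericity fact the paper establishes later inside the proof of the orbit pairing lemma (Lemma~\ref{lem:pairing}), and you apply it directly to pin down an arbitrary $\sigma\in\op{Aut}(Gv\cup Gw)$ as left-multiplication by a single $g_0\in G$. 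The complexification route is conceptually slick in that it reduces the real two-orbit statement to the already-proved complex one-orbit theorem in one stroke; your route is more elementary, avoids the passage through $V_{\mathbb C}$, and makes transparent \emph{why} a second orbit breaks the spurious symmetry --- the between-orbit Gram data $\langle v,gw\rangle$ separates group elements, whereas (as you correctly observe) the within-orbit data $\langle v,gv\rangle$ only sees $\lambda(g)=g+g^{-1}$.
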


\begin{proof}
Assume  that $(v,w)\in V$ are such that $G\cong \op{Aut}(G(v + iw))$ (which is generic by Theorem~\ref{thm:one-orbit}) and $|Gv| = |Gw| = |G|$ (which is a generic by Lemma~\ref{lem.distinct points}).
The latter implies the canonical map $G \to \op{Aut}(Gv \cup Gw)$ is an injection, so we need only conclude that the latter group is of order at most $|G|$.
Since $G\cong \op{Aut}(G(v + iw))$, it is also sufficient to construct an injection
\[
    \alpha\colon \op{Aut}(Gv \cup Gw) \to \op{Aut}(G(v + iw)).
\]
We define $\alpha(\sigma)(x + iy) := \sigma(x) + i\sigma(y)$. Since $\sigma \in \op{Aut}(Gv \cup Gw)$, it extends to a linear isometry $M\colon V \to V$. Hence $\alpha(\sigma)$ extends to the linear isometry $x+iy\mapsto Mx + iMy$, demonstrating that $\alpha(\sigma)$ is indeed in $\op{Aut}(G(v + iw))$. We can also see that $\alpha$ is injective, since if $\alpha(\sigma) = \alpha(\tau)$, then in particular,
\begin{align*}
    \sigma(gv) + i\sigma(gw) = \alpha(\sigma)(v + iw) = \alpha(\tau)(v + iw) = \tau(gv) + i\tau(gw).
\end{align*}
By examining the real and imaginary parts of this equation, we find that $\sigma = \tau$.
\end{proof}

\section{Recovering the concrete group}\label{sec:concrete recovery}

We have seen that Problem~\ref{prob.main}\ref{prob.main.abstract} can be solved with one, possibly two generic orbits.
In this section we will find, as one might expect, that the stronger Problem~\ref{prob.main}\ref{prob.main.concrete} may require significantly more.
We will recover the concrete group $G$ in two steps.
In the first step, we determine how $G$ permutes the points in a union of several generic orbits.
In the second step, we linearly extend this action to the span of these orbits.
Given enough generic orbits, this will determine the full action of $G$.

\subsection{Recovering the action on generic orbits}

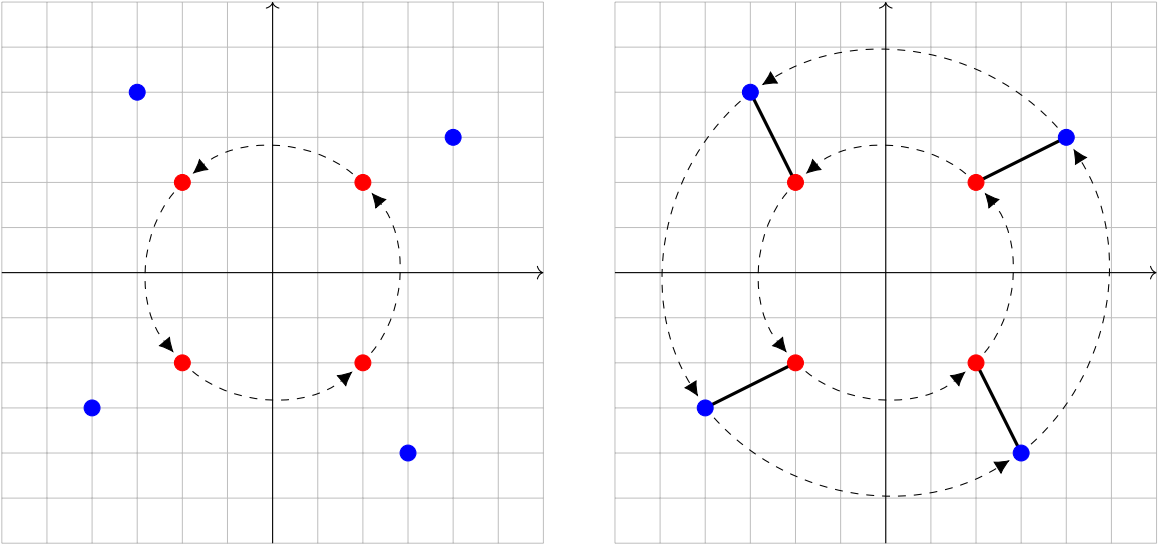
\begin{figure}
    \centering
    
    \begin{tikzpicture}[scale=0.6]
        \draw[very thin, lightgray] (-6, -6) grid (6, 6);
        
        \draw[->] (-6, 0) -- (6, 0) node[right] {};
        \draw[->] (0, -6) -- (0, 6) node[above] {};
        
        \coordinate (P1) at (4, 3);
        \coordinate (P2) at (-3, 4);
        \coordinate (P3) at (-4, -3);
        \coordinate (P4) at (3, -4);
    
        \filldraw[blue] (P1) circle (5pt) node[above] {};
        \filldraw[blue] (P2) circle (5pt) node[left] {};
        \filldraw[blue] (P3) circle (5pt) node[below] {};
        \filldraw[blue] (P4) circle (5pt) node[right] {};
        
        \coordinate (Q1) at (2, 2);
        \coordinate (Q2) at (-2, 2);
        \coordinate (Q3) at (-2, -2);
        \coordinate (Q4) at (2, -2);
        
        \filldraw[red] (Q1) circle (5pt) node[above] {};
        \filldraw[red] (Q2) circle (5pt) node[left] {};
        \filldraw[red] (Q3) circle (5pt) node[below] {};
        \filldraw[red] (Q4) circle (5pt) node[right] {};

    \draw[dashed, -{Latex[length=2mm,width=2mm]}, bend right=40, shorten >=5pt, shorten <=5pt] (Q1) to node[midway, above, sloped] {} (Q2);
    \draw[dashed, -{Latex[length=2mm,width=2mm]}, bend right=40, shorten >=5pt, shorten <=5pt] (Q2) to node[midway, above, sloped] {} (Q3);
    \draw[dashed, -{Latex[length=2mm,width=2mm]}, bend right=40, shorten >=5pt, shorten <=5pt] (Q3) to node[midway, above, sloped] {} (Q4);
    \draw[dashed, -{Latex[length=2mm,width=2mm]}, bend right=40, shorten >=5pt, shorten <=5pt] (Q4) to node[midway, above, sloped] {} (Q1);
    \end{tikzpicture}%
    \qquad%
    \begin{tikzpicture}[scale=0.6]

    \draw[very thin, lightgray] (-6, -6) grid (6, 6);
    
    \draw[->] (-6, 0) -- (6, 0) node[right] {};
    \draw[->] (0, -6) -- (0, 6) node[above] {};
    
    \draw[very thick, black,] (P1) -- (Q1);
    \draw[very thick, black,] (P2) -- (Q2);
    \draw[very thick, black,] (P3) -- (Q3);
    \draw[very thick, black,] (P4) -- (Q4);
    
    \coordinate (P1) at (4, 3);
    \coordinate (P2) at (-3, 4);
    \coordinate (P3) at (-4, -3);
    \coordinate (P4) at (3, -4);
    
    \filldraw[blue] (P1) circle (5pt) node[above] {};
    \filldraw[blue] (P2) circle (5pt) node[left] {};
    \filldraw[blue] (P3) circle (5pt) node[below] {};
    \filldraw[blue] (P4) circle (5pt) node[right] {};
    
    \coordinate (Q1) at (2, 2);
    \coordinate (Q2) at (-2, 2);
    \coordinate (Q3) at (-2, -2);
    \coordinate (Q4) at (2, -2);
    
    \filldraw[red] (Q1) circle (5pt) node[above] {};
    \filldraw[red] (Q2) circle (5pt) node[left] {};
    \filldraw[red] (Q3) circle (5pt) node[below] {};
    \filldraw[red] (Q4) circle (5pt) node[right] {};

    \draw[dashed, -{Latex[length=2mm,width=2mm]}, bend right=40, shorten >=5pt, shorten <=5pt] (Q1) to node[midway, above, sloped] {} (Q2);
    \draw[dashed,-{Latex[length=2mm,width=2mm]}, bend right=40, shorten >=5pt, shorten <=5pt] (Q2) to node[midway, above, sloped] {} (Q3);
    \draw[dashed,-{Latex[length=2mm,width=2mm]}, bend right=40, shorten >=5pt, shorten <=5pt] (Q3) to node[midway, above, sloped] {} (Q4);
    \draw[dashed,-{Latex[length=2mm,width=2mm]}, bend right=40, shorten >=5pt, shorten <=5pt] (Q4) to node[midway, above, sloped] {} (Q1);

    \draw[dashed,-{Latex[length=2mm,width=2mm]}, bend right=40, shorten >=5pt, shorten <=5pt] (P1) to node[midway, above, sloped] {} (P2);
    \draw[dashed,-{Latex[length=2mm,width=2mm]}, bend right=40, shorten >=5pt, shorten <=5pt] (P2) to node[midway, above, sloped] {} (P3);
    \draw[dashed,-{Latex[length=2mm,width=2mm]}, bend right=40, shorten >=5pt, shorten <=5pt] (P3) to node[midway, above, sloped] {} (P4);
    \draw[dashed,-{Latex[length=2mm,width=2mm]}, bend right=40, shorten >=5pt, shorten <=5pt] (P4) to node[midway, above, sloped] {} (P1);
    \end{tikzpicture}
    \caption{An illustration of the orbit pairing lemma (Lemma~\ref{lem:pairing}) and its proof. \textbf{(left)} A pair of orbits in $\bbR^2$ generated by a subgroup $C_4\leq \op{O}$(2). The permutation on the inner orbit induced by a generator of $C_4$ is depicted with dashed arrows. \textbf{(right)} Black line segments connect each point in the inner orbit with the nearest point in the outer orbit, representing the equivariant bijection $\beta$ constructed in the proof of Lemma~\ref{lem:pairing}. The equivariance of this bijection determines the action of the $C_4$ generator on the outer orbit as well, represented with dashed arrows.}
    \label{fig:pairing}
\end{figure}

The group $G$ acts on different orbits in concert, but the one-orbit theorem (Theorem~\ref{thm:one-orbit}) only enables us to recover $G$'s action on individual generic orbits.
The following lemma will allow us to compare and connect the actions of $G$ across multiple generic orbits.

\begin{lemma}[Orbit pairing lemma]
\label{lem:pairing}
	Let $V$ denote a finite-dimensional (real or complex) Hilbert space, and take any finite $G\leq \op{Aut}(V)$. Then for generic $(v,w)\in V^2$, any $\sigma \in \op{Aut}(Gv \cup Gw)$ sends $Gv$ to itself, and $\sigma$ is uniquely determined by its action on $Gv$.
\end{lemma}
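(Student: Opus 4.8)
The plan is to first arrange, through a finite intersection of generic conditions, that the orbits $Gv$ and $Gw$ each have the full size $|G|$ and lie on two \emph{different} spheres, and then to produce a canonical nearest-point pairing $\beta\colon Gv\to Gw$ that every element of $\op{Aut}(Gv\cup Gw)$ must respect. For the genericity setup: by Lemma~\ref{lem.distinct points} the conditions $|Gv|=|G|$ and $|Gw|=|G|$ are generic; the condition $\|v\|\neq\|w\|$ is generic, being the complement of the zero set of the polynomial $\|v\|^2-\|w\|^2$; and I would also impose that $\|v-gw\|\neq\|v-g'w\|$ for all distinct $g,g'\in G$. This last condition holds off the zero set of $\operatorname{Re}\langle v,(g'-g)w\rangle$ (which, up to a constant, is the difference of the squared distances, since $\|gw\|=\|g'w\|$), a real polynomial in the real coordinates of $(v,w)$ that is not identically zero because $(g'-g)w\neq 0$ for a suitable $w$. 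Intersecting these finitely many generic sets, I may assume $(v,w)$ satisfies all of them.

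The first assertion is then immediate: any $\sigma\in\op{Aut}(Gv\cup Gw)$ extends to a linear isometry of $V$ and therefore preserves norms, so, since $Gv$ and $Gw$ consist of vectors of norm $\|v\|$ and $\|w\|$ respectively with $\|v\|\neq\|w\|$, these orbits are disjoint and $\sigma$ sends each of them onto itself.

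For the second assertion, define $\beta\colon Gv\to Gw$ by sending each point of $Gv$ to the unique nearest point of $Gw$; this is well defined by the distinct-distances condition together with $|Gw|=|G|$. Since $G$ acts on $Gv\cup Gw$ by isometries preserving $Gv$ and $Gw$, the pairing $\beta$ is $G$-equivariant; transporting it along the $G$-equivariant bijections $g\mapsto gv$ and $g\mapsto gw$ turns it into the map $g\mapsto gg_{0}$ on $G$ for a fixed $g_{0}\in G$, which is a bijection, so $\beta$ is a bijection. Now let $\sigma\in\op{Aut}(Gv\cup Gw)$. By the first assertion $\sigma$ is an isometry permuting both $Gv$ and $Gw$, hence it carries nearest-point pairs to nearest-point pairs, i.e.\ $\sigma\circ\beta=\beta\circ\sigma$ on $Gv$. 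Therefore $\sigma|_{Gw}=\beta\circ(\sigma|_{Gv})\circ\beta^{-1}$ is determined by $\sigma|_{Gv}$, and since $\sigma$ is the union of its restrictions to $Gv$ and $Gw$, it is determined by its action on $Gv$.

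The one place that warrants care is the claim that the nearest-point assignment $\beta$ is a \emph{bijection} rather than merely a well-defined function — a priori, several points of $Gv$ could share a nearest point in $Gw$. This is exactly where generic freeness of the orbits enters: it lets us identify $\beta$ with a right translation of $G$. Everything else rests on the familiar principle that a nonzero polynomial vanishes only on a non-generic set, already invoked several times in the paper.
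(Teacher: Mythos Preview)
Your proof is correct and follows essentially the same approach as the paper's: impose the generic conditions $|Gv|=|Gw|=|G|$, $\|v\|\neq\|w\|$, and pairwise-distinct distances $\|v-gw\|$, then use the nearest-point map $\beta\colon Gv\to Gw$ and its equivariance with respect to every $\sigma\in\op{Aut}(Gv\cup Gw)$ to conclude. Your ordering (first separating the orbits by norm, then proving $\sigma\beta=\beta\sigma$) is arguably tidier than the paper's, and your right-translation description of $\beta$ is an equivalent alternative to the paper's transitivity argument for bijectivity.
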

\begin{proof}
    Define $\beta\colon Gv \to Gw$ by
	\[\beta(x) := \arg\min_{y\in Gw}\|x - y\|.\]
	We claim that for generic $(v,w)\in V^2$, $\beta$ is a well-defined bijection satisfying $\beta\circ \sigma = \sigma\circ \beta$ for any $\sigma \in \operatorname{Aut}(Gv \cup Gw)$.
    
    We first show $\beta$ is well-defined. It suffices to show that for a generic $(v,w)\in V^2$, there is a unique $g\in G$ that minimizes $\|v-gw\|^2$.
    Indeed, since $G$ is a group of linear isometries, for each $h,k\in G$ with $h\neq k$, we have $\|v-hw\|^2=\|v-kw\|^2$ if and only if $\langle v,hw\rangle=\langle v,kw\rangle$, if and only if $\langle v,(h-k)w\rangle=0$.
    Meanwhile, since $h\neq k$, $A:=h-k$ is nonzero, and so the polynomial $(v,w)\mapsto\langle v,Aw\rangle$ is nonzero, for example, at the top left- and right-singular vectors of $A$.
    Thus, the set $K_{h,k}$ of $(v,w)\in V^2$ for which $\|v-hw\|^2\neq\|v-kw\|^2$ is generic, and so $g\mapsto \|v-gw\|^2$ has a unique minimizer for every $(v,w)$ in the generic set $\bigcap_{h,k\in G,h\neq k}K_{h,k}$.
        
    Now let $\sigma \in \operatorname{Aut}(Gv\cup Gw)$. Then $\sigma$ extends to some linear isometry $M\colon V\to V$, letting us write
    \begin{align*}
        \beta\sigma(a) & = \arg\min_{b \in Gw} \|\sigma(a)-b\| = \arg\min_{b \in Gw} \|Ma-b\| = \arg\min_{b \in Gw} \|a-M^{-1}b\| = \arg\min_{b \in Gw} \|a-\sigma^{-1}(b)\| = \sigma\beta(a). 
    \end{align*}
    Note that, in particular, $\beta$ can be seen to be $G$-equivariant by considering the symmetries $\sigma$ that arise from the action of $G$. Finally, since we may assume by Lemma~\ref{lem.distinct points} that both orbits have size $|G|$, and the actions of $G$ on each orbit is transitive, it follows from $G$-equivariance that $\beta$ is bijective.

    With the properties of $\beta$ established, we will use it to finish the proof. We may assume that $\|v\| \neq \|w\|$ since $\|v\|^2 - \|w\|^2$ is a nonzero polynomial in $v$ and $w$, so the complement of its zero set is generic. Hence, the points in $Gv$ and $Gw$ have different norms, so $\sigma$ can be restricted to a map $\sigma|_{Gv}\colon Gv\to Gv$. The action of $\sigma$ on any $x\in Gw$ is then given by $\sigma(x) = \beta\sigma|_{Gv}\beta^{-1}(w)$,
    finishing the proof.
\end{proof}

In the following theorem, $[\bbC:F]$ denotes the degree of the field extension $\bbC/F$.
Before proceeding, we take a moment to demonstrate how this cheeky shorthand for ``1 if $F=\bbC$ and 2 if $F=\bbR$'' is not devoid of mathematical content.
In the theorem that follows, the ``1 if $F=\bbC$'' comes from the one-orbit theorem, and the ``2 if $F=\bbR$'' comes from the two-orbit theorem. Furthermore, the ``two'' of the two-orbit theorem arises in its proof when we view $\bbC$ as a real vector space of dimension two and reduce to the one-orbit theorem.
As such, our use of $[\bbC:F]$ does ultimately trace back to the degree of the field extension $\bbC/F$. We will use the notation $[\bbC:F]$ freely through the remainder of the paper.

\begin{theorem}[Multi-orbit theorem]
\label{thm:permutation-extension}
    Let $V$ denote a finite-dimensional Hilbert space over $F\in \{\mathbb R, \mathbb C\}$, and take any finite $G\leq \op{Aut}(V)$.
	For generic $(v_1,\dots,v_k) \in V^k$, the canonical map $G\to \op{Aut}(Gv_1\cup \dots \cup Gv_k)$ is an isomorphism if $k\geq [\mathbb C : F]$.
\end{theorem}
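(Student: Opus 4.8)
The plan is to bootstrap from the base case $k=[\mathbb C:F]$ to all larger $k$. Write $m:=[\mathbb C:F]$, so that the case $k=m$ is precisely the one-orbit theorem (Theorem~\ref{thm:one-orbit}\ref{item:one-orbit-automorphism}) when $F=\mathbb C$ and the two-orbit theorem (Theorem~\ref{thm:two-orbit}) when $F=\mathbb R$; it therefore remains only to treat $k>m$. I would impose the following generic conditions on $(v_1,\dots,v_k)\in V^k$: (i) $|Gv_i|=|G|$ for every $i$, via Lemma~\ref{lem.distinct points} applied to each coordinate; (ii) the norms $\|v_1\|,\dots,\|v_k\|$ are pairwise distinct, since each $\|v_i\|^2-\|v_j\|^2$ is a nonzero polynomial; (iii) the tuple $(v_1,\dots,v_m)$ lies in the generic set provided by the base case, so that $G\to\op{Aut}(Gv_1\cup\dots\cup Gv_m)$ is an isomorphism; and (iv) for each $i$ with $m<i\le k$, the pair $(v_1,v_i)$ lies in the generic set of the orbit pairing lemma (Lemma~\ref{lem:pairing}). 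Conditions (iii) and (iv) are phrased for smaller tuples, but each pulls back along the relevant coordinate projection $V^k\to V^j$ (being, up to reindexing, a set of the form $U\times V^{k-j}$ with $U$ generic) to a generic subset of $V^k$; since a finite intersection of generic sets is generic, all of (i)--(iv) hold simultaneously on a generic subset of $V^k$.

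Now fix such a tuple and put $S:=Gv_1\cup\dots\cup Gv_k$. The canonical map $G\to\op{Aut}(S)$ is injective because, by (i), $G$ acts regularly---in particular faithfully---on $Gv_1\subseteq S$; so it suffices to show $|\op{Aut}(S)|\le|G|$. Any $\sigma\in\op{Aut}(S)$ extends to a linear isometry $M$ of $V$, which preserves norms; since the points of $Gv_i$ all have norm $\|v_i\|$ and these norms are distinct by (ii), $\sigma$ must satisfy $\sigma(Gv_i)=Gv_i$ for every $i$. Hence restriction yields a well-defined group homomorphism $\rho\colon\op{Aut}(S)\to\op{Aut}(Gv_1\cup\dots\cup Gv_m)$, $\sigma\mapsto\sigma|_{Gv_1\cup\dots\cup Gv_m}$, the codomain being correct because $\sigma|_{Gv_1\cup\dots\cup Gv_m}=M|_{Gv_1\cup\dots\cup Gv_m}$.

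The crux is that $\rho$ is injective. Let $\sigma\in\ker\rho$, so $\sigma$ fixes $Gv_1\cup\dots\cup Gv_m$, and in particular $Gv_1$, pointwise. For each $i$ with $m<i\le k$, the map $\sigma|_{Gv_1\cup Gv_i}$ lies in $\op{Aut}(Gv_1\cup Gv_i)$---it permutes $Gv_1\cup Gv_i$ since $\sigma$ fixes each of $Gv_1,Gv_i$ setwise, and it is a restriction of $M$---and by (iv) the orbit pairing lemma guarantees that an element of $\op{Aut}(Gv_1\cup Gv_i)$ is uniquely determined by its action on $Gv_1$. Since $\sigma|_{Gv_1\cup Gv_i}$ agrees on $Gv_1$ with the identity of $Gv_1\cup Gv_i$, which is also an element of $\op{Aut}(Gv_1\cup Gv_i)$, we conclude $\sigma|_{Gv_i}=\operatorname{id}$. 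As this holds for every $i>m$ and $\sigma$ already fixes $Gv_1,\dots,Gv_m$ pointwise, $\sigma=\operatorname{id}$. Therefore $|\op{Aut}(S)|\le|\op{Aut}(Gv_1\cup\dots\cup Gv_m)|=|G|$ by (iii), and combined with injectivity of the canonical map, $G\to\op{Aut}(S)$ is an isomorphism.

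The step I expect to require the most care is the genericity bookkeeping across arities in conditions (iii) and (iv): one must verify that a condition originally phrased for a smaller tuple remains generic once regarded as a condition on $(v_1,\dots,v_k)$, which rests on genericity being preserved by preimages under coordinate projections. The other point worth stating carefully---though not hard---is that distinct orbit norms force every automorphism of $S$ to fix each $Gv_i$ setwise, as this is exactly what permits restricting automorphisms of $S$ to the sub-unions that the base case and the pairing lemma can handle.
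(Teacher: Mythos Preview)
Your proof is correct and uses the same ingredients as the paper's own argument---the one/two-orbit theorems for the base case $k=[\mathbb C:F]$, Lemma~\ref{lem.distinct points} for injectivity, and the orbit pairing lemma to propagate from $v_1$ to the remaining orbits. The organization differs only cosmetically: the paper treats $k=2$ (complex) and $k\ge 3$ as separate cases, in the latter reducing to the $k=2$ result on each pair $(v_1,v_j)$ and then arguing that the resulting group elements $g_j$ coincide, whereas you handle all $k>m$ at once by showing the restriction homomorphism $\op{Aut}(S)\to\op{Aut}(Gv_1\cup\dots\cup Gv_m)$ is injective; your explicit distinct-norms condition (ii) makes the setwise invariance of each $Gv_i$ cleaner than in the paper's Case~III.
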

    
\begin{proof}
We argue in cases:

\medskip

\noindent
\textbf{Case I:} $k=1$ and $F=\mathbb C$. 
This case is covered by the one-orbit theorem (Theorem~\ref{thm:one-orbit}).

\medskip

\noindent
\textbf{Case II:} $k=2$. 
The $F=\mathbb R$ case follows from the two-orbit theorem (Theorem~\ref{thm:two-orbit}), and so we may assume $F=\mathbb C$.
Select $(v,w) \in V^2$ such that $|Gv| = |G|$ as in Lemma~\ref{lem.distinct points}, the one-orbit theorem (Theorem~\ref{thm:one-orbit}) holds for $v$, and the orbit pairing lemma (Lemma~\ref{lem:pairing}) holds for the pair $(v,w)$, all of which are generic conditions.
The canonical map $G\to \operatorname{Aut}(Gv \cup Gw)$ is injective since the action of $G$ on $Gv$ has trivial stabilizers by assumption.
Now consider any $\sigma \in \operatorname{Aut}(Gv\cup Gw)$. 
By Lemma~\ref{lem:pairing}, $\sigma$ acts on $Gv$ and is uniquely determined by that action. 
Since $\sigma|_{Gv} \in \op{Aut}(Gv)$, then by Theorem~\ref{thm:one-orbit}, we must have $\sigma|_{Gv} = g|_{Gv}$ for some $g\in G$. 
Hence, both $\sigma$ and $g|_{Gv\cup Gw}$ are automorphisms extending $\sigma|_{Gv}$.
By uniqueness, it follows that $\sigma = g|_{Gv\cup Gw}$. 

\medskip

\noindent
\textbf{Case III:} $k \geq 3$.
Suppose that $(v_1,\dots,v_k) \in V^k$ are such that, for all $2\leq j \leq k$, the $k=2$ case holds for the pair $(v_1,v_j)$. 
We also invoke Lemma~\ref{lem.distinct points} to assume $|Gv_j| = |G|$ for all $1\leq j \leq k$.
As in the $k=2$ case, the canonical map $G\to \operatorname{Aut}(Gv_1\cup \dots \cup Gv_k)$ is injective.
Consider an arbitrary $\sigma\in \operatorname{Aut}(Gv_1\cup \dots \cup Gv_k)$.
Invoking the $k=2$ case, $\sigma|_{Gv_1\cup Gv_j}$ is in the image of the canonical map $G\to \op{Aut}(Gv_1 \cup Gv_j)$ for each $2\leq j \leq k$. 
Hence, the restriction of $\sigma$ to $Gv_1\cup Gv_j$ satisfies $\sigma|_{Gv_1\cup Gv_j} = g_j|_{Gv_1\cup Gv_j}$ for some $g_j \in G$.
It suffices to show that $g_j = g_2$ for each $3\leq j \leq k$. 
Indeed, since $v_1$ is in both $Gv_1\cup Gv_2$ and $Gv_1\cup Gv_j$, it follows that $g_2v_1 = \sigma(v_1) = g_jv_1$. 
Since the action of $G$ on $Gv_1$ has trivial stabilizers by assumption, we conclude $g_j = g_2$.
\end{proof}

\subsection{Linearly extending the action}

By the multi-orbit theorem (Theorem~\ref{thm:permutation-extension}), given $k\geq[\mathbb{C}:F]$ generic orbits, we can recover all permutations of the points in those orbits that are realized by members of $G$.
By extending linearly, we can then reconstruct how $G$ acts on the invariant subspace $S$ spanned by these orbits.
Certainly if $S=V$, then at this point we have successfully constructed the transformations in $G$, but even if $S\subsetneq V$, we may still be able to recover $G$.
Indeed, if the codimension of $S$ is smaller than the dimension $r$ of the lowest-dimensional nontrivial representation of the abstract group $G$, then $G$ necessarily acts trivially on the orthogonal complement of $S$.
It remains to determine how many generic orbits are needed to span a subspace of codimension smaller than $r$.

Our bounds will involve a particular representation of the hidden group. Recall that for any finite group $G$, the \textbf{regular representation} of $G$ over a field $F$ is the vector space $R$ consisting of all formal $F$-linear combinations of elements of $G$.
Additionally, $R$ comes equipped with a $G$-action:
\[
g\cdot \sum_{h\in G}a_h h
= \sum_{h\in G}a_h gh
= \sum_{k\in G}a_{g^{-1}k} k.
\]
To see how the regular representation is relevant to our problem, note that an orbit $Gv$ can be viewed as the image of the $G$-equivariant map $g\mapsto gv$, which we can extend linearly to form a map $R\to V$ with image equal to $\operatorname{span}Gv$.
By passing to an equivariant linear map between representations, we gain access to tools from representation theory like Schur's lemma.
The following lemma extends this idea to unions of orbits, which are particularly relevant to our application:

\begin{lemma}\label{lem:regular-rep-maps}
Let $V$ denote a finite-dimensional vector space over a field $F$, and take any finite group $G$ with a linear action on $V$.
Let $R$ denote the regular representation of $G$ over $F$, and let $e_i$ denote the vector in $R^k$ which is the identity in the $i$th component and zero in all other components. 
\begin{enumerate}[label=(\alph*)]
\item
\label{item.reg rep b}
For every $v_1,\dots, v_k \in V$, there exists a $G$-equivariant linear map $\phi\colon R^k \to V$ such that 
\[
\phi(e_i) = v_i
\qquad
\text{for all}
\qquad
1\leq i \leq k.
\]
\item
\label{item.reg rep a}
For every $G$-equivariant linear map $\phi\colon R^k \to V$, it holds that
\[
\operatorname{im}\phi 
= \operatorname{span}\bigcup_{i=1}^k G\phi(e_i).
\]
\end{enumerate}
\end{lemma}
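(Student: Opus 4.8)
The plan is to exploit the fact that the vectors $\{g\cdot e_i : g\in G,\ 1\le i\le k\}$ form an $F$-basis of $R^k$. Indeed, $\{g : g\in G\}$ is a basis of $R$ by the very definition of the regular representation as the space of formal $F$-linear combinations of group elements, and $R^k$ is a direct sum of $k$ copies of $R$, so its standard basis is obtained by placing each such $g$ in one of the $k$ slots. Both parts then reduce to the elementary principle that an $F$-linear map out of $R^k$ is freely and uniquely determined by its values on this basis, and is $G$-equivariant exactly when those values transform correctly under $G$.

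For part~\ref{item.reg rep b}, I would simply \emph{define} $\phi$ on the basis by $\phi(g\cdot e_i) := g v_i$ and extend $F$-linearly; this is well-defined because $\phi$ has been prescribed on a basis, and it satisfies $\phi(e_i)=\phi(1\cdot e_i)=1\cdot v_i=v_i$. To verify $G$-equivariance, I would observe that for each $h\in G$ the two linear maps $x\mapsto\phi(h\cdot x)$ and $x\mapsto h\cdot\phi(x)$ agree on every basis vector, since $\phi(h\cdot(g\cdot e_i))=\phi((hg)\cdot e_i)=(hg)v_i=h(gv_i)=h\,\phi(g\cdot e_i)$. Two linear maps that agree on a basis coincide, so $\phi(h\cdot x)=h\cdot\phi(x)$ for all $x\in R^k$ and all $h\in G$, which is precisely $G$-equivariance.

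For part~\ref{item.reg rep a}, I would prove the two inclusions separately. The containment $\operatorname{span}\bigcup_i G\phi(e_i)\subseteq\operatorname{im}\phi$ is immediate: by equivariance each $g\phi(e_i)$ equals $\phi(g\cdot e_i)\in\operatorname{im}\phi$, and $\operatorname{im}\phi$ is a subspace. Conversely, since $\{g\cdot e_i\}$ spans $R^k$, the image $\operatorname{im}\phi$ is spanned by $\{\phi(g\cdot e_i)\}=\{g\phi(e_i)\}=\bigcup_i G\phi(e_i)$, giving $\operatorname{im}\phi\subseteq\operatorname{span}\bigcup_i G\phi(e_i)$. Combining the two inclusions yields the claimed equality.

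There is no serious obstacle here; the whole content is the observation that $G$-equivariance of a linear map can be checked on, and is determined by, a basis, and that the chosen basis $\{g\cdot e_i\}$ of $R^k$ is exactly the $G$-orbit sweep of the standard generators $e_1,\dots,e_k$. The only step warranting a moment's care is confirming that $\{g\cdot e_i\}$ genuinely is a basis, which follows directly from the definition of $R$ together with the direct-sum structure of $R^k$.
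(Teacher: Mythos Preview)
Your proof is correct and follows essentially the same approach as the paper: both define $\phi$ on the basis $\{g\cdot e_i\}$ by $\phi(g\cdot e_i)=gv_i$ (the paper packages this as a sum of component maps $\phi_i\colon R\to V$, but it is the same construction), and both deduce part~\ref{item.reg rep a} from the fact that $\bigcup_i Ge_i$ is a basis of $R^k$ together with equivariance. Your version is slightly more explicit in verifying equivariance on basis vectors, but there is no substantive difference.
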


\begin{proof}
For \ref{item.reg rep b}, select any $v_1, \dots, v_k \in V$. 
For each $1\leq i \leq k$, define $\phi_i\colon R\to V$ by taking $\phi_i(g) = gv_i$ for $g\in G$ and extending linearly. 
Then $\phi\colon R^k \to V$ defined by
\[
\phi(r_1,\dots, r_k) 
= \phi_1(r_1) + \dots + \phi_k(r_k)
\]
satisfies the desired properties.
For \ref{item.reg rep a}, we note that $\bigcup_{i=1}^k Ge_i$ forms a basis for $R^k$, and so
\[
\operatorname{im}\phi 
= \operatorname{span}\phi\bigg(\bigcup_{i=1}^k Ge_i\bigg)
= \operatorname{span}\bigcup_{i=1}^k \phi(Ge_i)
= \operatorname{span}\bigcup_{i=1}^k G\phi(e_i),
\]
where the last step applies the fact that $\phi$ is $G$-equivariant. 
\end{proof}

Next, we apply Lemma~\ref{lem:regular-rep-maps} to characterize when $k$ generic $G$-orbits span a large subspace of $V$:

\begin{theorem}\label{thm:complex-orbit-span-equivalence}
Let $V$ denote a finite-dimensional vector space over a field $F\in\{\mathbb{R},\mathbb{C}\}$, and take any finite group $G$ with a linear action on $V$.
Let $r$ denote the dimension of the smallest nontrivial representation of $G$, and let $R$ denote the regular representation of $G$ over $F$. 
Then the following are equivalent:
\begin{enumerate}[label=(\alph*)]
\item\label{item:generic-span} 
For generic $(v_1,\dots,v_k)\in V^k$, it holds that $\operatorname{codim}\operatorname{span}\bigcup_{i=1}^k Gv_i<r$.
\item\label{item:exist-span}
There exists $(v_1,\dots, v_k) \in V^k$ such that $\operatorname{codim}\operatorname{span}\bigcup_{i=1}^k Gv_i<r$.
\item\label{item:exist-surjection} 
There exists a $G$-equivariant linear map $\phi\colon R^k \to V$ such that $\operatorname{corank}\phi<r$.
\item\label{item:k-large} 
It holds that 
\begin{equation}
\label{eq.threshold for repn}
k 
\geq \max_\pi \frac{n_\pi(V)-(r-1)\cdot[\pi=\mathbf{1}]}{n_\pi(R)},
\end{equation}
where the maximum is over all (finitely many) irreducible representations $\pi$ of $G$, $n_\pi(V)$ denotes the multiplicity of $\pi$ in $V$, $\mathbf{1}$ denotes the trivial representation, and the Iverson bracket $[P]$ is $1$ when the statement $P$ holds and $0$ otherwise.
\end{enumerate}
\end{theorem}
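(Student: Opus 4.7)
My plan is to prove the cyclic chain \ref{item:generic-span} $\Rightarrow$ \ref{item:exist-span} $\Leftrightarrow$ \ref{item:exist-surjection} $\Leftrightarrow$ \ref{item:k-large}, and to close the loop via the observation that the condition in \ref{item:generic-span} cuts out a Zariski open subset of $V^k$. Explicitly, after fixing an $F$-basis of $V$, form the matrix whose columns are the vectors $gv_i$ as $g$ ranges over $G$ and $i$ over $1,\ldots,k$; the codimension of the span is less than $r$ precisely when some $(\dim V - r + 1)$-minor of this matrix is nonzero, which is a polynomial (hence Zariski-open) condition on the coordinates of $(v_1,\ldots,v_k)$. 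Since $V^k$ is an irreducible real algebraic variety, any nonempty Zariski-open set is dense, so \ref{item:exist-span} already upgrades to \ref{item:generic-span}. The equivalence \ref{item:exist-span} $\Leftrightarrow$ \ref{item:exist-surjection} is immediate from Lemma~\ref{lem:regular-rep-maps}: given a tuple $(v_1,\ldots,v_k)$, part~\ref{item.reg rep b} builds a $G$-equivariant $\phi\colon R^k\to V$ with $\phi(e_i)=v_i$, and part~\ref{item.reg rep a} shows $\operatorname{im}\phi = \operatorname{span}\bigcup_i Gv_i$, so $\operatorname{corank}\phi$ equals the codimension in question; conversely, any such $\phi$ yields such a tuple via $v_i:=\phi(e_i)$.

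The heart of the proof is \ref{item:exist-surjection} $\Leftrightarrow$ \ref{item:k-large}, which I would obtain by computing the minimum corank achievable by a $G$-equivariant $\phi\colon R^k\to V$. Decomposing both sides into their $F$-isotypic components, any equivariant $\phi$ splits as $\bigoplus_\pi \phi_\pi$, so its corank is the sum of the coranks of its components, and the minimum can be taken componentwise. By Schur's lemma, an equivariant map between $\pi$-isotypic pieces of $F$-multiplicities $a$ and $b$ corresponds to a $b\times a$ matrix over the division algebra $D_\pi := \operatorname{End}_G(\pi)$, and its $F$-rank equals $\dim_F(\pi)$ times its $D_\pi$-rank, which can be any value up to $\min(a,b)$. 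Setting $a = k\,n_\pi(R)$ and $b = n_\pi(V)$, the minimum corank of $\phi$ is therefore
\[
\sum_\pi \dim_F(\pi)\cdot \max\{n_\pi(V) - k\,n_\pi(R),\, 0\}.
\]
Every nontrivial $\pi$ satisfies $\dim_F(\pi)\geq r$, so this sum is less than $r$ exactly when all nontrivial terms vanish (giving $k\geq n_\pi(V)/n_\pi(R)$ for each such $\pi$) and the trivial term $\max\{n_\mathbf{1}(V)-k,\,0\}$, with $n_\mathbf{1}(R)=1$, is itself less than $r$ (giving $k\geq n_\mathbf{1}(V)-(r-1)$). Combined, these inequalities are precisely \ref{item:k-large}.

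I expect the main obstacle to be the representation-theoretic bookkeeping in the \ref{item:exist-surjection} $\Leftrightarrow$ \ref{item:k-large} step over $\mathbb{R}$, where $D_\pi$ may be $\mathbb{R}$, $\mathbb{C}$, or $\mathbb{H}$. Using the intrinsic multiplicity $n_\pi(R)$ rather than $\dim_F(\pi)$ in the statement of \ref{item:k-large} is what makes the formula uniform across both fields, but one still needs to verify the identity $F\text{-rank}=\dim_F(\pi)\cdot D_\pi\text{-rank}$ for all three division algebra types and to recall that $n_\mathbf{1}(R)=1$ holds in both settings.
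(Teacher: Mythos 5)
Your proposal is correct and follows essentially the same route as the paper: Lemma~\ref{lem:regular-rep-maps} for \ref{item:exist-span}~$\Leftrightarrow$~\ref{item:exist-surjection}, Schur's lemma on isotypic components for \ref{item:exist-surjection}~$\Leftrightarrow$~\ref{item:k-large}, and the observation that the span condition is Zariski-open (the paper makes this concrete via a sum-of-squared-minors polynomial) to upgrade \ref{item:exist-span} to \ref{item:generic-span}. Your treatment of \ref{item:exist-surjection}~$\Leftrightarrow$~\ref{item:k-large} is somewhat more explicit than the paper's, which simply asserts the two conditions follow from Schur's lemma, but the underlying argument is the same.
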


Recall that $n_\pi(R)=\dim\pi$ in the complex case, but the expression is more complicated in the real case.

\begin{proof}[Proof of Theorem~\ref{thm:complex-orbit-span-equivalence}]
\ref{item:generic-span} $\Leftrightarrow$ \ref{item:exist-span}:
The forward direction is immediate.
For the converse, put $d:=\operatorname{dim}V$ and consider the function $p\colon V^k \to \mathbb{R}$ defined by
\[
p(v_1,\dots, v_k) 
= \sum_{\substack{I\subseteq\{1,\ldots,d\}\\J\subseteq\{1,\ldots,k|G|\}\\|I|=|J|=d-r+1}}
|\operatorname{det}M(v_1,\ldots,v_k)_{I,J}|^2,
\]
where the columns of $M(v_1,\ldots,v_k)\in F^{d\times k|G|}$ are the coefficients (with respect to some fixed basis of $V$) of the vectors $gv_i$ for $g\in G$ and $1\leq i\leq k$ (in some fixed order).
Notably, the span of the vectors
\[
\set{~gv_i~}{~g\in G,~1\leq i\leq k~} 
= \bigcup_{i=1}^k Gv_i
\]
has codimension $<r$ precisely when $p(v_1,\dots, v_k) \neq 0$.
If we view $V$ as a real vector space, then $p$ is a polynomial function of $k\operatorname{dim}_{\mathbb{R}}(V)$ real coordinates.
As such, \ref{item:exist-span} implies that $p$ is a nonzero polynomial function, and so the complement of its zero set is the desired generic subset of $V^k$.

\ref{item:exist-span} $\Leftrightarrow$ \ref{item:exist-surjection}:
This follows from Lemma~\ref{lem:regular-rep-maps}. 
For~($\Rightarrow$), select $v_1,\ldots,v_k\in V$ such that the span of $\bigcup_{i=1}^k Gv_i$ has codimension $m$.
Then Lemma~\ref{lem:regular-rep-maps}\ref{item.reg rep b} delivers a $G$-equivariant linear map $\phi\colon R^k\to V$ of corank $m$ by Lemma~\ref{lem:regular-rep-maps}\ref{item.reg rep a}.
For~($\Leftarrow$), take any $G$-equivariant linear $\phi\colon R^k\to V$ corank $m$ and put $v_i:=\phi(e_i)$ for $1\leq i\leq k$.
Then the span of $\bigcup_{i=1}^k Gv_i$ has codimension $m$ by Lemma~\ref{lem:regular-rep-maps}\ref{item.reg rep a}.

\ref{item:exist-surjection} $\Leftrightarrow$ \ref{item:k-large}:
By Schur's lemma, there exists a $G$-equivariant linear map $R^k\to V$ of corank $<r$ if and only if the following hold simultaneously:
\begin{itemize}
\item[(i)]
$n_\pi(R^k) \geq n_\pi(V)$ for every nontrivial irreducible representation $\pi$ of $G$, and
\item[(ii)]
$n_\mathbf{1}(R^k) > n_\mathbf{1}(V)-r$.
\end{itemize}
Since $n_\pi(R^k) = kn_\pi(R)$, the result follows.
\end{proof}

We combine the multi-orbit theorem (Theorem~\ref{thm:permutation-extension}) with Theorem~\ref{thm:complex-orbit-span-equivalence} to identify conditions under which we can recover the concrete group $G$ from generic orbits.

\begin{corollary}
\label{cor.near-opt generic recovery}
Let $V$ denote a finite-dimensional Hilbert space over a field $F\in\{\mathbb{R},\mathbb{C}\}$, and take any finite $G\leq\operatorname{Aut}(V)$.
\begin{enumerate}[label=(\alph*)]
\item\label{item.determine repn a}
If $k$ does not satisfy \eqref{eq.threshold for repn}, then every combination of $k$ orbits of $G$ can be realized as orbits of another subgroup of $\operatorname{Aut}(V)$.
That is, $k$ orbits fail to determine the concrete group $G$.
\item\label{item.determine repn b}
If $k$ satisfies both $k\geq[\mathbb{C}:F]$ and \eqref{eq.threshold for repn}, then $k$ generic orbits determine the concrete group $G$.
\end{enumerate}
\end{corollary}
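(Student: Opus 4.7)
My plan is to exploit the $G$-invariant orthogonal decomposition $V = S \oplus S^\perp$, where $S := \operatorname{span}\bigcup_{i=1}^k Gv_i$. By Theorem~\ref{thm:complex-orbit-span-equivalence}, condition~\eqref{eq.threshold for repn} is equivalent to the geometric statement that $\operatorname{codim} S < r$ generically, where $r$ is the minimum dimension of a nontrivial $F$-representation of $G$. Both parts of the argument then proceed by extracting the action of $G$ on $S$ and analyzing what happens on $S^\perp$.

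For part~\ref{item.determine repn b}, the hypothesis $k\geq [\mathbb{C}:F]$ triggers the multi-orbit theorem (Theorem~\ref{thm:permutation-extension}), which recovers how $G$ permutes the points of $Gv_1\cup\cdots\cup Gv_k$. Extending this permutation action $F$-linearly reconstructs the action of $G$ on $S$. Since~\eqref{eq.threshold for repn} forces $\dim S^\perp < r$ generically and every nontrivial irreducible $F$-representation of $G$ has dimension at least $r$, the restriction of $G$'s action to $S^\perp$ admits no nontrivial irreducible subrepresentation and must therefore be trivial. Together with the recovered action on $S$, this uniquely pins down each $g\in G$ as a concrete automorphism of $V$.

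For part~\ref{item.determine repn a}, failure of~\eqref{eq.threshold for repn} gives $\operatorname{codim} S \geq r$ for every tuple $(v_1,\ldots,v_k)\in V^k$. Writing the inclusion as $\rho(g) = \rho_S(g)\oplus \rho_{S^\perp}(g)$, I would construct $G' := \{\rho_S(g)\oplus \pi(g) : g\in G\}$ using an isometric representation $\pi\colon G\to \operatorname{Aut}(S^\perp)$ chosen to differ from $\rho_{S^\perp}$. Two subcases exhaust the possibilities: if $\rho_{S^\perp}$ is nontrivial, let $\pi$ be the trivial representation; if $\rho_{S^\perp}$ is trivial, use $\dim S^\perp\geq r$ to embed a nontrivial $r$-dimensional irreducible representation of $G$ onto an $r$-dimensional invariant subspace of $S^\perp$, extended trivially on its orthogonal complement within $S^\perp$. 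Since each $v_i$ lies in $S$, the elements $\rho_S(g)\oplus \pi(g)$ and $\rho(g)$ agree on $v_i$, so $G'v_i = Gv_i$ for every $i$; meanwhile, any $g$ at which $\pi(g)\neq \rho_{S^\perp}(g)$ witnesses $G\neq G'$.

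The main obstacle is the construction of $\pi$ in part~\ref{item.determine repn a}: one must realize the abstract $r$-dimensional irrep as a genuine orthogonal or unitary subrepresentation of $S^\perp$, not merely as an abstract representation. The classical averaging argument produces an invariant inner product on any finite-dimensional $F$-representation of a finite group, which lets the abstract irrep be embedded isometrically into any $F$-inner-product space of dimension at least $r$; combined with the automatic $G$-invariance of $S^\perp$ under the isometric inclusion, this finishes the argument.
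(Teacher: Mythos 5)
Your proof is correct and takes essentially the same approach as the paper: part~\ref{item.determine repn b} combines the multi-orbit theorem (Theorem~\ref{thm:permutation-extension}) with Theorem~\ref{thm:complex-orbit-span-equivalence} exactly as the authors do, and part~\ref{item.determine repn a} rests on the same observation that $\operatorname{codim}S\geq r$ leaves the action on $S^\perp$ undetermined. You are somewhat more explicit than the paper in part~\ref{item.determine repn a}, where you actually construct the alternative subgroup $G'$ via a two-case swap of the $S^\perp$-action (the paper leaves this as an informal remark); one small sharpening worth making is that $G\ne G'$ follows cleanly because exactly one of $G$, $G'$ acts trivially on $S^\perp$, which is a little more robust than pointing to a single $g$ with $\pi(g)\neq\rho_{S^\perp}(g)$ when $\rho_S$ is not faithful.
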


\begin{proof}
For \ref{item.determine repn a}, suppose $k$ does not satisfy \eqref{eq.threshold for repn}.
Then by Theorem~\ref{thm:complex-orbit-span-equivalence}, every choice of $k$ orbits spans an invariant subspace $S$ of codimension at least $r$.
Considering the abstract group $G$ has a nontrivial representation of dimension $r$, we cannot determine whether $G$ acts trivially on the orthogonal complement $S^\perp$ or nontrivially on an invariant subspace of $S^\perp$.
Overall, $k$ orbits fail to determine the concrete group $G$.

For \ref{item.determine repn b}, given $k\geq[\mathbb{C}:F]$ generic orbits, the multi-orbit theorem (Theorem~\ref{thm:permutation-extension}) determines all permutations of the points in those orbits that are realized by members of $G$.
By extending linearly, this determines how $G$ acts on the invariant subspace $S$ spanned by these orbits.
Since $k$ satisfies \eqref{eq.threshold for repn} by assumption, Theorem~\ref{thm:complex-orbit-span-equivalence} implies that the codimension of $S$ is smaller than the dimension $r$ of the smallest nontrivial representation of the abstract group $G$, and so $G$ necessarily acts trivially on the orthogonal complement $S^\perp$.
Overall, $k$ generic orbits determine the concrete group $G$.
\end{proof}

\begin{example}
For $C_n$ acting on $\mathbb{C}^n$ by cyclic permutations of coordinates, Corollary~\ref{cor.near-opt generic recovery}\ref{item.determine repn b} gives that a single generic orbit determines the representation.
More generally, a single generic orbit suffices whenever $V$ is the regular representation of $G$ over $\mathbb{C}$.
\end{example}

\begin{example}
A gap between \ref{item.determine repn a} and \ref{item.determine repn b} in Corollary~\ref{cor.near-opt generic recovery} appears when the bound in \eqref{eq.threshold for repn} equals $1$ but the field $F$ is real, i.e., $[\mathbb{C}:F]=2$.
This occurs, for example, when $V=\mathbb{R}^2$ and $G\neq C_2$, since then $n_\pi(V)\leq 1$ for all $\pi$.
This gap is an artifact of our analysis.
Indeed, given a generic orbit $S$ of a finite group $G\leq\operatorname{O}(2)$, Example~\ref{ex.real gram graph to G in R^2} explains how to recover $G$ up to isomorphism.
Furthermore, in these particular cases, one can show that $G$ is isomorphic to a unique subgroup of $\operatorname{Aut}(S)$, and this permutation action on $S$ extends linearly to $\operatorname{span}S=\mathbb{R}^2$.
Overall, $G$ is concretely determined by a single generic orbit.
We leave it as an open problem whether the hypothesis $k\geq[\mathbb{C}:F]$ in Corollary~\ref{cor.near-opt generic recovery}\ref{item.determine repn b} can be removed in general.
\end{example}

We conclude by identifying a shortcoming of Corollary~\ref{cor.near-opt generic recovery}, even in the complex case.

\begin{example}
Consider the group $G=\langle -\operatorname{id}\rangle\leq \operatorname{U}(d)$.
Corollary~\ref{cor.near-opt generic recovery} reports that $k$ orbits determine $G$ as a concrete group only if $k\geq d$, and furthermore, $d$ generic orbits suffice.
Paradoxically, we claim that a single generic orbit determines the concrete group in this case.
Indeed, a generic orbit under $G$ takes the form $\{\pm x\}$ for some nonzero $x\in\mathbb{C}^d$, while for any other order-$2$ subgroup of $\operatorname{U}(d)$, the centroid of a generic orbit is nonzero.
This does not contradict Corollary~\ref{cor.near-opt generic recovery}\ref{item.determine repn a} because the orbits considered there are not necessarily generic, whereas genericity allows one to make additional inferences.
\end{example}

\section{Discussion}
\label{sec.discussion}

In this paper, we presented bounds on the number of generic orbits required to abstractly or concretely recover a finite group of automorphisms of a finite-dimensional Hilbert space.
Opportunities for follow-on work remain.
First, in the real case, does a single generic orbit betray the group up to isomorphism? 
Is it possible to determine the group's permutation action on this generic orbit?
What is the minimum number of generic orbits necessary to recover a concrete group in general?
Next, what can be said if the group does not act by linear isometries?
It is fruitful to consider finite groups that preserve other bilinear forms?
While our methods for recovering the abstract group require an isometric linear action, we note that Theorem~\ref{thm:complex-orbit-span-equivalence} only requires a linear action.
Finally, how might one estimate groups in a way that is stable to noise, and how does noise impact the number of orbits needed to estimate the group?
These questions are particularly relevant to the task of learning symmetries from data.

\section*{Acknowledgments}

DGM was supported by NSF DMS 2220304.
BV thanks William C.\ Newman and various participants at the Norbert Wiener Center's 2024 Fall Fourier Talks for invaluable conversations about the ideas in this paper.
Both authors thank Adam Savage for highlighting the utility of deadlines.

\end{document}